\documentclass[a4paper]{amsart}

\usepackage{amsmath, amsfonts, amsthm, amssymb, mathrsfs, fullpage, enumerate, verbatim,color,float}

\newcommand{\N}{\mathbb{N}}
\newcommand{\T}{\mathcal{T}_n}
\newcommand{\C}{\mathfrak{C}_n}
\newcommand{\CS}{\mathfrak{C}}

\newcommand{\supp}{\text{supp}}

\usepackage{tikz}
\usetikzlibrary{arrows}

\newtheorem{thm}{Theorem}
\newtheorem{lem}[thm]{Lemma}

\newtheorem{cor}[thm]{Corollary}
\newtheorem{rem}[thm]{Remark}

% for comments
\setlength{\marginparwidth}{1in}
\let\oldmarginpar\marginpar
\renewcommand\marginpar[1]{\-\oldmarginpar[\raggedleft\footnotesize #1]%
{\raggedright\footnotesize #1}}

\title{Some isomorphism results for Thompson like groups $V_n(G)$}
\author{Collin Bleak, Casey Donoven and Julius Jonu\v{s}as}

\begin{document}
  \maketitle
  
\begin{abstract}
In this paper, we provide some isomorphism results for the groups $\{V_n(G)\}$, 
supergroups of the Higman-Thompson group $V_n$
where $n\in\N$ and $G\leq S_n$, the symmetric group on $n$ points.
These groups, introduced by Farley and Hughes, are the groups generated by $V_n$ and the tree automorphisms $[\alpha]_g$
defined as follows. 
For each $g\in G$ and each node $\alpha$ in the infinite rooted $n$-ary tree, 
the automorphisms $[\alpha]_g$ acts iteratively as $g$ on the child leaves of $\alpha$ and
every descendent of $\alpha$. In particular, we show that $V_n\cong V_n(G)$ if and only
if $G$ is semiregular (acts freely on $n$ points) and some additional sufficient conditions
for isomorphisms.  Essential tools in the above work are a study of the dynamics of the 
action of elements of $V_n(G)$ on the Cantor space, Rubin's Theorem, and transducers from Grigorchuk, 
Nekrashevych, and Suschanski\u{i}'s rational group on the $n$-ary alphabet.  
\end{abstract}

\section{Introduction}

    In this paper, we consider groups $\{V_n(G)\}$, where, for each $n\in\N$ and 
each permutation group $G\leq S_n$, 
we have $V_n(G)$ as a supergroup of the Higman-Thompson group $V_n$.    
(We follow Brown in \cite{BrownFinitenessProps} using $V_n$ to denote the group $G_{n,1}$ of 
Higman's book \cite{HigmanFPSG} and $V_2=V$.)
Specifically, we define the groups $\{V_n(G)\}$ as below.
    
      Consider the infinite regular $n$-ary rooted tree $\T$, with vertices the set of words 
in the free monoid $\{1, 2, ..., n\}^*$ and with an edge labelled $j$ between all pairs $(w,wj)$ 
of vertices where $w\in \{1,2,\ldots ,n\}^*$ and $j\in\{1,2,\ldots,n\}$.  We set $\CS_n$ to be 
the Cantor space $\{1,2,\ldots,n\}^\omega$ which is the boundary of this infinite tree,  points 
of which correspond to infinite geodesic paths from the root.  For a vertex $\theta$ in the tree 
we define the \emph{cone} of $\theta$ to be the set of all points from $\CS_n$ which have 
$\theta$ as a prefix, i.e.
\[
  [\theta] = \{ \theta u :  u \in \{1, 2, ..., n\}^{\omega} \}\subset \C.
\] 
As is well known, the set of all such cones is a clopen basis for the topology on $\CS_n$.   
Now, let $[\alpha_1], [\alpha_2], ..., [\alpha_t]$ and $[\beta_1], [\beta_2], ..., [\beta_t]$ be two
partitions of $\CS_n$ and define the map $v$ from the $\CS_n$  to itself by
$(\alpha_i\gamma)\cdot v=\beta_i\gamma$ for
all $i$ and $\gamma\in\{1, 2, ..., n\}^{\omega}$.  The Higman Thompson group $V_n$ is the 
group of all such
mappings.  In the above situation, we might say that $v$ is a \emph{prefix substitution map},
taking the set of prefixes $\{\alpha_i\}$ to the set of prefixes $\{\beta_i\}$.  

Now suppose 
$G\leq S_n$ and $g\in G$ and consider $[\alpha]_g$, the homeomorphism of $\CS_n$ which is 
the identity outside of the cone $[\alpha]$ and which acts on the cone $[\alpha]$ as 
$$\alpha\beta\cdot[\alpha]_g=\alpha\|(b_1\cdot g)\|(b_2\cdot g)\|(b_3\cdot g)...$$ for all 
$\beta\in\CS_n$.  (We will use $w\| j$ to concatenate the word $w$ with the word/letter $j$ 
when it seems to us that this notation might help the reader)  We now define $V_n(G)$ as follows:
\[
V_n(G)=\langle V_n, [\alpha]_g : g\in G, \alpha\in\{1, 2, ..., n\}^*\rangle.
\]
We call $[\alpha]_g$ an \emph{iterated permutation}.

This article investigates the isomorphism classes of the groups $V_n(G)$, with some perhaps unexpected results.  In particular, $V_n(G)\cong V_n$ if and only if $G$ 
is semiregular. (Recall $G\leq S_n$ is semiregular if it acts freely on $\{1,2,\ldots,n\}$.)

The groups $\{V_n(G)\}$ sit naturally in two families of constructed groups which have arisen 
in previous research. Elizabeth Scott in her research \cite{Scott84I, Scott84II, Scott84III} 
describes the first such family of groups which are developed further by Claas 
R\"over, specifically including an extension of $V$ with the Grigorchuk group $\Gamma$.
The second family are the finite similarity structure groups of Hughes (FSS groups), 
which are the focus of study in \cite{HughesFSS, FarleyHughesFSS}.

The class of groups $\{V_n(G)\}$ is first explicitly studied in 
\cite{FarleyHughesFSS}.  Farley and Hughes show that if $G$ is not semiregular, then $V_n(G)$ 
is not isomorphic to $V_n$ and that the commutator subgroup $V_n(G)'$ is a finite 
index simple subgroup of $V_n(G)$. The proof of the first result is to use Rubin's theorem 
to translate the question into topological dynamics, and to show that the respective groups 
of germs are not the same (this follows the same outline as the argument of Bleak and Lanoue 
in \cite{BleakLanoue} where it is shown that if $m\neq n$, then Brin's higher-dimensional 
Thompson groups $mV$ and $nV$ are not isomorphic).  This approach fails to distinguish $V_n$ from $V_n(G)$ when $G$ is semiregular, as in this case, the respective groups of germs are isomorphic. 

Of late, the structure of groups built as supergroups of the 
Higman-Thompson groups $V_n$ has become a topical focus of interest.  We can mention the papers of  
\cite{RoverGrig,NekrashevychVGroups,FarleyHughesFSS,BelkMatucciFInf,Thumann} which all 
explore properties of such groups.  Our own perspective has been heavily influenced by the 
dynamical methods which have arisen through the use of Rubin's theorem \cite{McClearyRubin, Rubin} 
and, as will be seen below, through the interaction between the theory of the extended 
Thompson type groups and the theory of groups of automata (c.f., \cite{GNS}).  Indeed, 
subgroups of Grigorchuk, Nekrashevych, and Sushchanski\u\i's Rational group seem to play 
an ever-increasing part in our research and in related work to the extended Thompson type 
groups (see \cite{GNS, BelkBleak,AutVn,NekrashevychVGroups}).

\subsection{Some further notation}
 Throughout this paper we will be composing functions from left to right.  We will also 
label words in the alphabet $(1,2,\ldots,n)$ with Greek letters and single letters with 
the Latin alphabet to make the distinction clearer, while using $\|$ to indicate 
concatenation when it is not obvious, as mentioned before.

One way to characterise homeomorphisms of Cantor space is via certain automata which are called
\emph{transducers}.  Finite transducers have a long history, with the 
abstract theory stretching back to Huffman \cite{Huffman}, and the concrete theory 
stemming from modelling ``Sequential machines'' such as cash registers and etc..  
Gl\v{u}skov in \cite{Gluskov63} gives a thorough introduction both to the abstract 
theory of these automata and to the history of the development of that theory.  We will be heavily influenced by the point of view of Grigorchuk, Nekrashevych, and Sushchanski\u\i\;\!\! in \cite{GNS} where they formally introduce the \emph{rational group};  the full group of homeomorphisms of Cantor space which can be described by finite transducers as described in more detail below.

 A \emph{transducer} 
$T_{q_0}$ is defined as a sextuple $T_{q_0}=\langle X_I, X_O, Q, \pi, \lambda, q_0 \rangle$ where 
$X_I$ and $X_O$ are finite alphabets, $Q$ is a set of states, $\pi:X_1\times Q\to Q$
and $\lambda:X_I\times Q\to X_O^*$ are mappings, and $q_0\in Q$. A transducer acts on finite and infinite
words over the input alphabet $X_I$ in a recursively defined fashion.
Let $\alpha=a_1a_2a_3...$ be a word.  Define 
\[
\alpha\cdot T_{q_0}=(a_1,q_0)\lambda\|\big((a_2a_3...)\cdot T_{(a_1,q_0)\pi}\big),
\]
where $T_{(a_1,q_0)\pi}$ is the same transducer as $T_{q_0}$, except with a different start state.
This shows how $\lambda$ can be thought of as being a rewrite rule and $\pi$ as a transition function,
determining which rewrite 
rule to use. A transducer is \emph{synchronous} if it preserves the length of words it acts on, i.e.
$\lambda:X_I\times Q\to X_0$.

In an abuse of notation, we also give the transition function $\pi$ words for input 
instead of just single letter inputs, such as $(\rho,q_0)\pi$.  This is simply shorthand for the last state
the transducer would enter if had acted on the whole word one letter at a time.  For example,
\[(\rho\|\chi)\cdot T_{q_0}=(\rho\cdot T_{q_0})\|(\chi\cdot T_{(\rho,q_0)\pi}).\]  

Let $X$ be a finite set with cardinality at least two.  The paper \cite{GNS} describes how any homeomorphism of the Cantor space $X^\omega$ can be represented by a transducer   $T_{q_0}=\langle X, X, Q, \pi, \lambda, q_0 \rangle$ with $Q$ an infinite set.  However, if one restricts to the homeomorphisms which can be represented by such transducers with $Q$ finite, then one obtains a group, which Grigorchuk et al. \!refer to as the \emph{rational group over alphabet $X$}.  These homeomorphisms are the ones which admit precisely finitely many types of ``local actions.''

Following \cite{GNS}, which shows that elements of $V_2$ can be characterised by transducers representing elements of the rational group on a two letter alphabet, the elements of $V_n$ can also be characterised by such finite 
transducers (on an $n$ letter alphabet).  These transducers admit an even stronger condition; for any such element $v$ there is a finite 
transducer $\tau_v$ representing the homeomorphism, and an $m\in \N$ so that all paths 
of length $m$ from the initial state result in an ``identity state'' after which no 
changes occur to the infinite string characterising the initial point in Cantor space 
which is being transformed.

Some elements of $V_n(G)$ simply permute finite prefixes, meaning the partitions from the 
definition of elements of $V_n$ are equal. For convenience, we will introduce some notation for 
these elements. We will write $(\alpha_1\;\alpha_2\;\ldots\;\alpha_t)$ to denote the element that maps
$\alpha_i\gamma$ to $\alpha_{i+1}\gamma$ for all $\gamma\in\{1,2,...,n\}^{\omega}$ with 
$\alpha_{t}\gamma$ mapping to $\alpha_1\gamma$. In the case where an element $v\in V_n$
exactly permutes a subset of the children of a prefix $\alpha$, i.e. a subset of
$\{\alpha1,\alpha2,...,\alpha n\}$, we write $v=\lfloor\alpha\rfloor_h$ where $h\in S_n$ maps $i$ to $j$ if
and only if $v$ maps $[\alpha\| i]$ to $[\alpha\| j]$.

 Note that we will often use the word transducer 
when we actually mean the homeomorphism of Cantor space which arises by applying that 
transducer to all the infinite 
strings which represent the points of Cantor space. 

\subsection{Statement of Results}
In this section, we record our two main results on finding isomorphisms between group in the family $\{V_n(G)\}$.
 
Recall that a group $G\leq S_n$ is \emph{semiregular} if $g$ has no fixed points for all non-trivial $g\in G$, 
i.e. $\text{Stab}_G(x)=\{id\}$ for all $x\in\{1,2,...n\}$.

Our first theorem is perhaps surprising in that many experts had the opinion that there would be many isomorphism 
types for groups $V_n(G),$ with $G$ a semiregular subgroup of $S_n$ (e.g., see \cite{FarleyHughesFSS},
 where such an expectation is expressed at the end of Section 7, in the discussion following Example 7.24).

\begin{thm}\label{semi}
  Let $n\geq 2$ and $G \leq S_n$. Then $V_n(G) \cong V_n$ if and only if $G$ is semiregular. 
\end{thm}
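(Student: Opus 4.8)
The statement is an equivalence, and the two directions are of very different character, so I would treat them separately.

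For the implication $V_n(G)\cong V_n \Rightarrow G$ semiregular I would argue by contrapositive and recover the result of Farley and Hughes \cite{FarleyHughesFSS}. Both $V_n$ and $V_n(G)$ act faithfully on $\C$, and because each contains $V_n$ the actions satisfy the local-movability hypotheses of Rubin's theorem \cite{Rubin}; hence any abstract isomorphism $V_n(G)\to V_n$ is implemented by a homeomorphism of $\C$ conjugating one action onto the other. Such a spatial isomorphism preserves, point by point, the group of germs. If $G$ is not semiregular, some nontrivial $g\in G$ fixes a letter $i$, and then at the point $\alpha iii\cdots$ the iterated permutation $[\alpha]_g$ fixes the point but fixes no neighbourhood of it, so it contributes a nontrivial finite germ group that $V_n$ does not possess. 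This obstructs any spatial, hence any abstract, isomorphism, giving the contrapositive.

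For the reverse implication --- the substantive content --- I assume $G$ semiregular and construct the isomorphism as conjugation by a single homeomorphism $\tau$ of $\C$, built as a finite synchronous transducer, i.e.\ an element of the rational group $\mathcal{R}_n$. Semiregularity is exactly what makes $\tau$ available. A free $G$-action splits $\{1,\dots,n\}$ into orbits of equal size $|G|$, so after choosing orbit representatives I identify the alphabet with $\{1,\dots,k\}\times G$, where $n=|G|k$; under this identification each global iterated permutation $[\epsilon]_g$ acts on every letter by fixing the orbit coordinate and right-multiplying the group coordinate, $h_i\mapsto h_ig$. I then take $\tau$ to be the differencing transducer that fixes orbit coordinates and rewrites group coordinates by $h_i\mapsto h_ih_{i-1}^{-1}$ (with $h_0$ the identity); its state set remembers only the previous group coordinate, so it is finite, and its inverse is the running product $\tilde h_i\mapsto\tilde h_i\tilde h_{i-1}\cdots\tilde h_1$. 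The decisive computation is that under $\tau$ the diagonal multiplication telescopes: every coordinate past the first is unchanged, since $(h_ig)(h_{i-1}g)^{-1}=h_ih_{i-1}^{-1}$, while the first is right-multiplied by $g$. Hence $\tau^{-1}[\epsilon]_g\tau$ alters only the first letter and equals a finite-depth element $\flr{\epsilon}_{\hat g}\in V_n$; the localized $\tau^{-1}[\alpha]_g\tau$ are supported on a single cone and alter one further letter, again landing in $V_n$. These are finitely many generator checks carried out at the transducer level.

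The main obstacle is not the iterated permutations but showing that this single $\tau$ carries $V_n(G)$ exactly onto $V_n$. I would establish the clean biconditional that a rational homeomorphism lies in $V_n(G)$ if and only if its $\tau$-conjugate lies in $V_n$. For this I would characterize $V_n$ inside $\mathcal{R}_n$ as those homeomorphisms whose transducers reach an identity state after bounded depth, and $V_n(G)$ as those whose transducers, after bounded depth, reach a terminal looping state given by the diagonal $G$-action; I would then verify that conjugation by $\tau$ interchanges these two transducer normal forms, the differencing converting a terminal $G$-looping state precisely into a terminal identity state. Proving this equivalence --- in particular that $\tau^{-1}V_n\tau\subseteq V_n$ and that the image exhausts $V_n$ --- is the delicate bookkeeping, and it is here that semiregularity is used most essentially, both to guarantee that the telescoping is effected by a genuine element of $S_n$ and to keep the state set of $\tau$ finite so that $\tau\in\mathcal{R}_n$.
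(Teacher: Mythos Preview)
Your proposal is correct and follows essentially the same architecture as the paper: Rubin's theorem plus a local dynamical invariant for necessity, and an explicit synchronous transducer for the Rubin conjugator giving sufficiency. Your differencing transducer, once you identify the alphabet with $\{1,\dots,k\}\times G$ via orbit representatives, is exactly the paper's transducer $A_{id}=A_{H,R,id}$ with state set $H$, rewrite $(i,g)\lambda=i\cdot g$, and transition $(i,g)\pi=h_i$; your telescoping identity $(h_ig)(h_{i-1}g)^{-1}=h_ih_{i-1}^{-1}$ is the content of the paper's Lemmas on $A_h=[\emptyset]_{hg^{-1}}A_g$ and $A_h=A_g\lfloor\emptyset\rfloor_{g^{-1}h}$.

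Two minor divergences are worth noting. For necessity you invoke germ groups (the Farley--Hughes argument), whereas the paper instead argues directly that $[\emptyset]_g$ has, in every neighbourhood of the fixed point $x^\omega$, points of nontrivial \emph{finite} orbit, while a case analysis shows no element of $V_n$ does; both are conjugacy invariants and both work. For sufficiency you propose to verify $\tau^{-1}V_n(G)\tau=V_n$ by characterising each group inside $\mathcal{R}_n$ via terminal transducer states (identity loops versus $G$-loops) and showing conjugation by $\tau$ swaps the two normal forms. The paper takes the more pedestrian but fully explicit route of computing $A_{id}^{-1}(\cdot)A_{id}$ on a generating set (small swaps and iterated permutations) in both directions; this avoids having to prove the transducer characterisation of $V_n(G)$ and makes the surjectivity check concrete. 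Your route is cleaner conceptually but, as you acknowledge, the bookkeeping showing that $\tau^{-1}V_n\tau\subseteq V_n$ (small swaps pick up only a depth-one $\lfloor\cdot\rfloor_h$ correction) still has to be done, and in practice that computation is the same one the paper carries out.
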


In fact the construction of the maps we build to take $V_n(G)$ to $V_n$ when $G$ is 
semiregular is fairly general, and we can follow the idea of the construction even 
when $G$ is not semiregular.  From that perspective we have that the  previous 
theorem can be seen as a sub-case of the following result.
(Note that below we use $\text{Stab}_{S_n}(R)$ to indicate the setwise stabilizer 
of $R$ in $S_n$, not the pointwise stabilizer.)

\begin{thm}\label{thm:fullthm}
Let $n\geq 2$ and let $H\leq S_n$ be semiregular. Also let $R$ be a set of orbit 
representatives of $H$'s natural action on $\{1,2,...,n\}$,
and let $G\leq N_{S_n}(H)\cap \text{Stab}_{S_n}(R)$. Then $HG$ is a group and $V_n(HG)\cong V_n(G)$ 
\end{thm}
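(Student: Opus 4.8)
The plan is to realise the isomorphism as conjugation by a single homeomorphism $\Phi$ of $\C$ coming from a finite transducer built out of $H$; when $G$ is trivial this specialises to the ``if'' direction of Theorem \ref{semi}. First, since $H$ is semiregular the map $R\times H\to\{1,\dots,n\}$, $(r,h)\mapsto r\cdot h$, is a bijection, so each letter has unique \emph{coordinates} $(r,h)$ with $r\in R$, $h\in H$. In these coordinates right multiplication by $h_0\in H$ is $(r,h)\mapsto(r,hh_0)$ (it fixes the orbit representative), while for $g\in G$ the two hypotheses give exactly $(r,h)\cdot g=(r\cdot g,\,g^{-1}hg)$: one writes $hg=g(g^{-1}hg)$ with $g^{-1}hg\in H$ because $G\le N_{S_n}(H)$, and $r\cdot g\in R$ because $G\le\text{Stab}_{S_n}(R)$. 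That $HG\le S_n$ is immediate from $G$ normalising $H$, and since $[\alpha]_x[\alpha]_y=[\alpha]_{xy}$ the map $x\mapsto[\alpha]_x$ is a homomorphism, so $V_n(HG)=\langle V_n,\ [\alpha]_h,\ [\alpha]_g\rangle$ and $V_n(G)=\langle V_n,\ [\alpha]_g\rangle$ over all words $\alpha$, all $h\in H$, $g\in G$.

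I will take $\Phi$ to be the synchronous transducer with state set $H$, start state the identity, which on reading a letter $(r,h)$ in state $s$ outputs $(r,hs^{-1})$ and moves to state $h$; equivalently $\Phi$ replaces the $H$-coordinate stream $h_1h_2h_3\cdots$ of a point by its successive differences $h_1,\,h_2h_1^{-1},\,h_3h_2^{-1},\dots$, fixing every $r$-coordinate. This is a finite, hence rational, homeomorphism, with $\Phi^{-1}$ reading $(r,k)$ in state $s$, outputting $(r,ks)$ and moving to state $ks$. The central point is that $\Phi$ turns a global twist into a local one: applying $h$ on the right to every $H$-coordinate changes only the first successive difference, so a direct computation gives $\Phi^{-1}[\epsilon]_h\Phi=\flr{\epsilon}_h\in V_n$.

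Now I claim $x\mapsto\Phi^{-1}x\Phi$ carries $V_n(HG)$ into $V_n(G)$, checked on generators. Reading $\Phi$ through a prefix, the same difference phenomenon collapses the tail-twist of $[\alpha]_h$ to a permutation of the children of $\alpha^\ast$, giving $\Phi^{-1}[\alpha]_h\Phi=\flr{\alpha^\ast}_{h^\ast}\in V_n$, where $\alpha^\ast$ is the $\Phi$-image of $\alpha$ from the start state and $h^\ast\in H$ is determined by the accumulated states. For $g\in G$ the coordinate formula $(r,h)\cdot g=(r\cdot g,g^{-1}hg)$ makes $g$ commute past the difference operation except on the first letter: one finds $\Phi^{-1}[\epsilon]_g\Phi=[\epsilon]_g$, and for general $\alpha$, $\Phi^{-1}[\alpha]_g\Phi=[\alpha^\ast]_g\,\flr{\alpha^\ast}_{q}$ for some $q\in S_n$, which lies in $V_n(G)$ since $[\alpha^\ast]_g\in V_n(G)$ and $\flr{\alpha^\ast}_q\in V_n$. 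Finally, for $v\in V_n$ I use the transducer description from the introduction: after a bounded prefix $v$ reaches an identity state and merely copies, and at that instant the states of $\Phi^{-1}$ (reading the input) and of $\Phi$ (reading $v$'s output) resynchronise after one letter, because the next state of $\Phi^{-1}$ equals the $H$-coordinate that $\Phi$ then reads; hence $\Phi^{-1}v\Phi$ is again eventually the identity and lies in $V_n$. Together these give $\Phi^{-1}V_n(HG)\Phi\subseteq V_n(G)$.

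For surjectivity I run the symmetric computation for $x\mapsto\Phi x\Phi^{-1}$: for $v\in V_n$ the map $\Phi v\Phi^{-1}$ is, on each cone of a partition, eventually the application of a fixed element of $H$ to every remaining letter, so it lies in $V_n(H)\subseteq V_n(HG)$; and $\Phi[\alpha]_g\Phi^{-1}\in V_n(G)\subseteq V_n(HG)$ by the same commuting computation. Thus each generator $u$ of $V_n(G)$ satisfies $u=\Phi^{-1}(\Phi u\Phi^{-1})\Phi$ with $\Phi u\Phi^{-1}\in V_n(HG)$, giving $V_n(G)\subseteq\Phi^{-1}V_n(HG)\Phi$. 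Since $x\mapsto\Phi^{-1}x\Phi$ is the restriction of an automorphism of $\text{Homeo}(\C)$ it is injective, and the two inclusions force its image to be exactly $V_n(G)$, so it is the desired isomorphism. I expect the main obstacle to be the bookkeeping in the two synchronisation arguments—that conjugation by $\Phi$ sends $V_n$ into $V_n$ while conjugation by $\Phi^{-1}$ sends $V_n$ only into $V_n(H)$—since this is where one must control the interaction between the non-synchronising direction of $\Phi^{\pm1}$ and the eventually-identity structure of elements of $V_n$, and where the coordinate computations for $g\in G$ must be arranged so that both $N_{S_n}(H)$ and $\text{Stab}_{S_n}(R)$ are genuinely used.
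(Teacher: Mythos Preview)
Your approach is essentially the paper's: your transducer $\Phi$ is exactly the paper's $A_{id}$ (the paper's state $g$ is your state $g^{-1}$, and both machines output the ``successive differences'' stream $h_1,\,h_2h_1^{-1},\,h_3h_2^{-1},\dots$), and the proof proceeds by checking generators under conjugation in both directions. Your $(r,h)$-coordinates and the differences description are a clean repackaging of the paper's rewrite/transition functions $\lambda,\pi$; the paper instead handles $V_n$ via explicit small-swap generators rather than your resynchronisation argument based on the ``eventually identity state'' description, but the content is the same.

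One small inaccuracy: for nonempty $\alpha$ your claim $\Phi[\alpha]_g\Phi^{-1}\in V_n(G)$ is too strong---a direct computation in your coordinates gives $\Phi[\alpha]_g\Phi^{-1}=[\alpha']_{gc}$ with $c=h_0^{-1}gh_0g^{-1}\in H$ determined by the last letter $(r_0,h_0)$ of $\alpha$, so in general this lies only in $V_n(HG)$. This does not damage the argument, since you immediately pass to $V_n(HG)$ anyway; it is precisely the asymmetry you anticipate in your last sentence, mirroring the paper's inverse-conjugation calculation where $\phi^{-1}$ of an element of $V_n$ lands in $V_n(H)$ rather than $V_n$.
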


The afore mentioned transducers play an essential role in building the isomorphisms above.

\section{Semiregular Groups}

In this section, we prove the forward implication in Theorem \ref{semi}, showing that semiregularity is 
a necessary condition for an isomorphism to exist. This follows directly from Farley and Hughes'
non-isomorphism result in \cite{FarleyHughesFSS}.  We provide a similar proof, tailored specifically for
the case of semiregular groups.

\subsection{Rubin's Theorem}

Farley and Hughes use Rubin's Theorem \cite{Rubin} to conclude 
that isomorphisms between $V_n(G)$ and $V_m(H)$ must be via conjugation, which allows them to
make restrictions on isomorphisms by examining the dynamical structure of elements in the groups.

In order to state Rubin's Theorem, we need to give the definition of a group $G$ acting in a 
\emph{locally dense} way on a set $X$.  If $X$ is a topological space and $G$ is group of 
homeomorphisms from $X$ to $X$, $G$ is locally dense if and only if for any $x\in X$ and open 
neighbourhood $U$ of $x$, the set $\{x\cdot g\;|\;g\in G,g|_{X \setminus U}=1|_{X\setminus U}\}$ has 
closure containing an open set.\vspace{\baselineskip}

%%%%%%%%%%
%%%%%%%%%%
\noindent {\bf Rubin's Theorem }{\it
Let $X$ and $Y$ be locally compact, Haudorff topological spaces without isolated points, let $A(X)
$ and $A(Y)$ be the automorphism groups of $X$ and $Y$, respectively, and let $G\leq A(X)$ and 
$H\leq A(Y)$.  If $G$ and $H$ are isomorphic and are both locally dense, then for each 
isomorphism $\phi: G\to H$, there is a unique homeomorphism, $\psi:X \to Y$, so that for each $g
\in G$, we have $(g)\phi=\psi^{-1}g\psi.$}\vspace{\baselineskip}

We will refer to the conjugating homeomorphism $\psi$ (which realises the isomorphism $\phi$) as the \emph{Rubin conjugator}.
%%%%%%%%%%
%%%%%%%%%%

\begin{lem}
Let $G\leq S_n$ and $H\leq S_m$.
If $\phi:V_n(G)\to V_m(H)$ is an isomorphism, then there exists a unique homeomorphism, $\psi$,
from the $n$-ary Cantor set to the $m$-ary Cantor set such that for every $v\in V_n(G)$ we have
$(v)\phi=\psi^{-1}v\psi$.
\end{lem}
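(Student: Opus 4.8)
The plan is to apply Rubin's Theorem directly, so the real work is verifying its hypotheses for both groups. Rubin's Theorem requires that $V_n(G)$ and $V_m(H)$ be realised as groups of homeomorphisms acting locally densely on locally compact, Hausdorff spaces without isolated points. First I would observe that the relevant spaces are the $n$-ary and $m$-ary Cantor sets $\CS_n$ and $\CS_m$; these are compact, Hausdorff, and have no isolated points, so the topological preconditions on $X$ and $Y$ are immediate.

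The main step is to check that the action of $V_n(G)$ on $\CS_n$ is locally dense (and identically for $V_m(H)$ on $\CS_m$). Fix a point $x\in\CS_n$ and an open neighbourhood $U$ of $x$. Since the cones $[\theta]$ form a clopen basis, I would choose a prefix $\theta$ of $x$ with $[\theta]\subseteq U$. The key observation is that $V_n$ already acts highly transitively on cones: for any two proper subcones $[\theta\|\eta_1]$ and $[\theta\|\eta_2]$ inside $[\theta]$, there is an element of $V_n$ supported inside $[\theta]$ (hence trivial off $U$) carrying one onto the other via a prefix substitution map. Consequently the orbit $\{x\cdot g : g\in V_n(G),\ g|_{\CS_n\setminus U}=1\}$ meets every subcone of $[\theta]$, so its closure contains the open set $[\theta]$. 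This establishes local density; note that the iterated permutations $[\alpha]_g$ are not even needed here, since $V_n\leq V_n(G)$ already supplies enough support-restricted elements.

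With both groups shown to be locally dense and both ambient spaces satisfying the topological hypotheses, Rubin's Theorem applies verbatim: any isomorphism $\phi:V_n(G)\to V_m(H)$ is realised by a unique homeomorphism $\psi:\CS_n\to\CS_m$ with $(v)\phi=\psi^{-1}v\psi$ for all $v\in V_n(G)$. Since the lemma as stated fixes $m=n$ in its spaces but the argument is insensitive to this, the conclusion follows immediately. I expect the only genuinely delicate point to be the local density verification, and specifically making sure the witnessing group elements are genuinely supported away from $\CS_n\setminus U$; using a cone $[\theta]\subseteq U$ and prefix substitutions internal to $[\theta]$ handles this cleanly, since such maps fix every point outside $[\theta]$ pointwise.
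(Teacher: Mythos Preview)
Your proposal is correct and follows essentially the same approach as the paper: verify that the Cantor spaces satisfy the topological hypotheses of Rubin's Theorem, observe that $V_n$ already acts locally densely on $\CS_n$, and then use $V_n\leq V_n(G)$ to transfer local density to $V_n(G)$ before invoking Rubin. The paper simply asserts that local density of $V_n$ is ``easy to see,'' whereas you spell out the cone argument, but the strategy is identical.
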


\begin{proof}
It is known that Cantor sets satisfy all of the conditions in Rubin's Theorem and it is easy to see that  
$V_n$ acts locally densely on the $n$-ary Cantor set for all integer $n\geq2$.  Since $V_n\leq V_n(G)$, $V_n(G)$ is also locally dense
and the lemma follows directly from Rubin's Theorem.
\end{proof}

{The next lemma  discusses an easy isomorphism which arises as a result of topological conjugacy.}

\begin{lem}
{Let $G$ and $H$ be conjugate subgroups of the symmetric group. Then $V_n(G)\cong V_n(H)$.}
\end{lem}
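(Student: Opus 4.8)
The plan is to realise the isomorphism as conjugation by a single explicit homeomorphism of Cantor space, exactly as Rubin's Theorem predicts it must arise. Write $H=\sigma^{-1}G\sigma$ for some $\sigma\in S_n$, and let $\psi$ be the homeomorphism of $\C$ that applies $\sigma$ to every coordinate, i.e.
\[
  (a_1a_2a_3\cdots)\cdot\psi=(a_1\cdot\sigma)\|(a_2\cdot\sigma)\|(a_3\cdot\sigma)\cdots.
\]
This $\psi$ is a homeomorphism because $\sigma$ is a bijection on letters; in particular it carries each cone $[\theta]$ onto the cone $[\theta\cdot\psi]$, where $\theta\cdot\psi$ denotes $\sigma$ applied letterwise to the finite word $\theta$. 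I would then show that the automorphism $v\mapsto\psi^{-1}v\psi$ of the full homeomorphism group of $\C$ restricts to an isomorphism $V_n(G)\to V_n(H)$.

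Since $V_n(G)=\langle V_n,[\alpha]_g:g\in G,\alpha\in\{1,\ldots,n\}^*\rangle$ and conjugation is a homomorphism, it suffices to track the two families of generators. First I would check that $\psi$ normalises $V_n$: if $v\in V_n$ is the prefix substitution taking the partition $\{\alpha_i\}$ to $\{\beta_i\}$, then unwinding $\psi^{-1}v\psi$ (recalling that we compose left to right) shows it is the prefix substitution taking $\{\alpha_i\cdot\psi\}$ to $\{\beta_i\cdot\psi\}$; because $\psi$ maps cones bijectively to cones, these remain partitions of $\C$, so $\psi^{-1}v\psi\in V_n$ and in fact $\psi^{-1}V_n\psi=V_n$. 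Second, and this is the computational heart, I would verify the conjugation formula for iterated permutations,
\[
  \psi^{-1}[\alpha]_g\psi=[\alpha\cdot\psi]_{\sigma^{-1}g\sigma}.
\]
The verification is a direct coordinate computation: applying $\psi^{-1}$ sends the cone $[\alpha\cdot\psi]$ to $[\alpha]$, the middle factor $[\alpha]_g$ rewrites the tail letterwise by $g$, and applying $\psi$ conjugates that letterwise action to $\sigma^{-1}g\sigma$ while restoring the prefix $\alpha\cdot\psi$; off the cone every factor is the identity.

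With these two facts in hand the conclusion is immediate. As $g$ ranges over $G$ the element $\sigma^{-1}g\sigma$ ranges over $H$, and as $\alpha$ ranges over all finite words so does $\alpha\cdot\psi$; hence conjugation by $\psi$ sends the generating set of $V_n(G)$ into $V_n(H)$, giving $\psi^{-1}V_n(G)\psi\subseteq V_n(H)$. Running the same argument with $\psi^{-1}$ (which applies $\sigma^{-1}$ letterwise and conjugates $H$ back to $G$) yields the reverse inclusion, so $\psi^{-1}V_n(G)\psi=V_n(H)$. Since conjugation by a homeomorphism is an isomorphism onto its image, $V_n(G)\cong V_n(H)$.

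I expect the only real obstacle to be bookkeeping: getting the direction of the conjugation right under the left-to-right composition convention, and confirming that letterwise $\sigma$ genuinely preserves the ``complete antichain'' structure underlying elements of $V_n$, so that the conjugate of a prefix substitution is again a bona fide prefix substitution rather than merely a homeomorphism. Everything else is formal.
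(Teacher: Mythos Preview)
Your proposal is correct and is exactly the approach the paper takes: the paper's one-line justification is that the isomorphism is ``a re-labeling of $\T$ using an iterated version of the conjugation from $G$ to $H$ in $S_n$,'' which is precisely your letterwise homeomorphism $\psi$. You have simply fleshed out the details of that sketch.
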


The isomorphism between the two is simply given by a re-labeling of $\T$ using an iterated version of the conjugation from
$G$ to $H$ in $S_n$. This greatly reduces the number of cases one needs to consider
when solving the isomorphism problem in this family of groups.
%%%%%%%%%%
%%%%%%%%%%

\subsection{Semiregularity} We now examine the dynamics near fixed points of specific elements in $V_n(G)$ to show how
the orbit structure can prevent isomorphisms from arising via Rubin's theorem.

%%%%%%%%%%
%%%%%%%%%%

\begin{lem}\label{lem:anticonjugacy}
Let $g\in G\leq S_n$ and $x\in\{1,...,n\}$ be such that $x\cdot g=x$ but $g\neq Id$.
The element $[\emptyset]_{g}\in V_n(G)$ is not conjugate to any element in $V_n$.
\end{lem}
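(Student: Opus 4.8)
The plan is to feed directly into the Rubin-theoretic framework set up above: since any isomorphism $V_n(G)\to V_n$ is realised by conjugation by a homeomorphism of Cantor space, it suffices to show that $[\emptyset]_g$ is not topologically conjugate, via \emph{any} self-homeomorphism of $\CS_n$, to an element of $V_n$ (this a fortiori rules out conjugacy inside $V_n(G)$). I would isolate two properties that are invariant under topological conjugacy and that together separate $[\emptyset]_g$ from $V_n$: \emph{having finite order}, and \emph{being locally the identity at each fixed point}.

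First I would record the dynamics of $[\emptyset]_g$. Since $x\cdot g=x$, the point $x^\omega$ is fixed, and as $g$ is a bijection fixing $x$ it permutes the remaining letters among themselves; hence each cone $[x^k]$ is invariant and, under the identification $x^k\gamma\leftrightarrow\gamma$, the restriction of $[\emptyset]_g$ to $[x^k]$ is again $[\emptyset]_g$. In particular $[\emptyset]_g^{\,k}=[\emptyset]_{g^k}$, so $[\emptyset]_g$ has finite order equal to $\mathrm{ord}(g)\geq 2$. Moreover, choosing a letter $y$ with $y\cdot g\neq y$, the points $x^k y\cdots$ lie arbitrarily close to $x^\omega$ yet are moved by $[\emptyset]_g$; thus $[\emptyset]_g$ is \emph{not} the identity on any neighbourhood of its fixed point $x^\omega$.

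The heart of the argument is to show that every finite-order element $v\in V_n$ is locally the identity at each of its fixed points. Here I would use the prefix-substitution normal form: write $v$ via partitions $[\alpha_1],\dots,[\alpha_t]$ and $[\beta_1],\dots,[\beta_t]$ with $\alpha_i\gamma\mapsto\beta_i\gamma$. A fixed point $q$ lies in a unique domain cone $[\alpha_i]$, and since $q=q\cdot v\in[\beta_i]$ as well, one of $\alpha_i,\beta_i$ is a prefix of the other. If $\alpha_i=\beta_i$ then $v$ is the identity on the neighbourhood $[\alpha_i]$ of $q$, as desired. Otherwise, say $\beta_i=\alpha_i\delta$ with $\delta\neq\emptyset$ (the expanding case $\alpha_i=\beta_i\delta$ reduces to this one by passing to $v^{-1}$); then $v$ acts on $[\alpha_i]$ by $\alpha_i\gamma\mapsto\alpha_i\delta\gamma$, whence $v^j(\alpha_i\gamma)=\alpha_i\delta^j\gamma$ for all $j$, so $v$ has infinite order, contradicting finiteness of order. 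Thus only the first case occurs.

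Finally I would assemble the contradiction. Suppose $[\emptyset]_g=\psi^{-1}v\psi$ for a homeomorphism $\psi$ and some $v\in V_n$. Order is a conjugacy invariant, so $v$ has finite order, and by the previous step $v$ is locally the identity at each of its fixed points. But ``being locally the identity at a fixed point'' is itself a topological-conjugacy invariant, so from the fact that $[\emptyset]_g$ is not locally the identity at $x^\omega$ one deduces that $v$ is not locally the identity at the corresponding fixed point $q=x^\omega\cdot\psi^{-1}$, a contradiction. The main obstacle is the finite-order lemma, and within it the bookkeeping that forces the non-identity prefix-substitution case to have infinite order: tracking that the iterates $\alpha_i\delta^j\gamma$ remain in $[\alpha_i]$ so the same rewrite rule applies, and cleanly reducing the expanding case to the contracting one via inverses.
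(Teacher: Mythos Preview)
Your argument is correct and shares the same skeleton as the paper's proof: both exhibit the fixed point $x^\omega$, both analyse a fixed point of $v\in V_n$ via the trichotomy $[\alpha_i]=[\beta_i]$, $[\beta_i]\subsetneq[\alpha_i]$, $[\alpha_i]\subsetneq[\beta_i]$, and both dispatch the third case by passing to $v^{-1}$.

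The one genuine difference is the choice of conjugacy invariant. The paper uses a purely local invariant: it observes that points $x^{k-1}yx^\omega$ near $x^\omega$ have \emph{finite nontrivial} orbit under $[\emptyset]_g$, and then shows that near any fixed point of \emph{any} $v\in V_n$ the orbit structure is either ``all fixed'' or ``one fixed point with all other orbits infinite''---so nontrivial finite orbits near a fixed point simply cannot occur in $V_n$. You instead import a global invariant, the finite order of $[\emptyset]_g$, and use it to eliminate the contracting/expanding cases. Your route is slightly slicker in that ``finite order'' plus ``not locally the identity at a fixed point'' are very clean invariants; the paper's route buys a stronger conclusion (it describes the local dynamics near fixed points for \emph{all} elements of $V_n$, not just torsion ones), which is in fact reused later in the paper to separate isomorphism classes for $n=4$.
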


\begin{proof}
The point $\gamma=x^\omega\in\{0,...,n\}^\omega$ is fixed under $[\emptyset]_{g}$.  Since $g\neq Id$,
there exists $y\in\{1,...,n\}$ such that $y\cdot g\neq y$.  Let $\gamma_k=x^{k-1}yx^\omega\in\{1,...,n\}^\omega$
with a $y$ in the $k$th position.  The points $\gamma_k$ have finite nontrivial orbit under
$[\emptyset]_{g}$ and can be found in every open cone containing $\gamma$

In order for an element $v\in V_n$ to have a fixed point, $v$ must take a cone $[\alpha]$
to another cone $[\beta]$ that has nontrivial intersection with $[\alpha]$.

Case 1: $[\alpha]=[\beta]$  This implies that every point within the cone $[\alpha]$ is a fixed point.
None of these fixed points will have points of nontrivial orbit within small open neighbourhoods.

Case 2: $[\beta]\subseteq[\alpha]$  This implies there exists a word $\gamma$ such that  
$\beta=\alpha\|\gamma$.  Let $\chi$ be an infinite word and consider that
$(\alpha\|\chi)\cdot v^m=\alpha\|\gamma^m\|\chi$ for all $m$.  This shows that the only fixed point of $v$ is
$\alpha\|\gamma^\omega$ and all other points in $[\alpha]$ have infinite orbit.

Case 3: $[\alpha]\subseteq [\beta]$ The element $v^{-1}$ takes cone $[\beta]$
to $[\alpha]$.  This was discussed is Case 2 and therefore $[\beta]$ contains one fixed point and the rest
have infinite orbit under $v^{-1}$.  An element and its inverse have the same orbit structure so $[\alpha]$
also contains a fixed point and points of infinite orbit.

Conjugation preserves orbit structure and since no elements of $V_n$ have nontrivial finite orbits arbitrarily close
to a fixed point, $[\emptyset]_{g}$ can not be conjugate to any element of $V_n$.
\end{proof}

We can now state the following corollary.

\begin{cor}
  Let $G \leq S_n$. If $V_n(G) \cong V_n$, then $G$ is semiregular. \label{cor:semiregularnecessity}
\end{cor}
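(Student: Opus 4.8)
The plan is to prove the contrapositive: assuming $G$ is not semiregular, I will exhibit an element of $V_n(G)$ whose dynamical behaviour cannot occur for any element of $V_n$ up to topological conjugacy, and then show that an isomorphism $V_n(G)\cong V_n$ would force exactly such a conjugacy via Rubin's Theorem. Since essentially all of the dynamical work has already been done in Lemma \ref{lem:anticonjugacy}, the corollary follows by assembling that lemma with the Rubin conjugator lemma proved above, so the argument is short.

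Concretely, suppose $G$ is not semiregular. By definition there is a non-identity $g\in G$ fixing some point $x\in\{1,2,\ldots,n\}$, so the hypotheses of Lemma \ref{lem:anticonjugacy} are met and the iterated permutation $[\emptyset]_g\in V_n(G)$ is not conjugate (by any homeomorphism of the $n$-ary Cantor set) to any element of $V_n$. Now suppose toward a contradiction that there is an isomorphism $\phi:V_n(G)\to V_n$. I would then invoke the Rubin conjugator lemma, whose hypotheses hold because both $V_n(G)$ and $V_n$ act locally densely on the $n$-ary Cantor set, to obtain a single homeomorphism $\psi$ with $(v)\phi=\psi^{-1}v\psi$ for every $v\in V_n(G)$. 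Applying this to $v=[\emptyset]_g$ gives $\psi^{-1}[\emptyset]_g\psi=([\emptyset]_g)\phi\in V_n$, so $[\emptyset]_g$ is topologically conjugate to an element of $V_n$, contradicting Lemma \ref{lem:anticonjugacy}. This contradiction shows no such $\phi$ exists, i.e. $V_n(G)\not\cong V_n$ when $G$ is not semiregular, which is the contrapositive of the claim.

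The only point requiring care is matching the two notions of conjugacy in play. Lemma \ref{lem:anticonjugacy} rules out conjugacy of $[\emptyset]_g$ to an element of $V_n$ on the grounds that conjugation preserves orbit structure, and that $[\emptyset]_g$ has nontrivial finite orbits accumulating at a fixed point while no element of $V_n$ does; this argument applies to conjugation by any homeomorphism of Cantor space, which is exactly the form of conjugacy the Rubin conjugator $\psi$ provides. I therefore expect the main, and only mild, obstacle to be confirming that the Rubin conjugator lemma applies in the specialised setting with $m=n$ and $H$ trivial, so that $V_m(H)=V_n$, and that its conclusion is genuinely conjugation by a homeomorphism, so that the orbit-structure obstruction of Lemma \ref{lem:anticonjugacy} can be brought to bear verbatim.
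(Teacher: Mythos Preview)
Your argument is correct and is exactly the intended proof: the paper states the corollary immediately after Lemma~\ref{lem:anticonjugacy} without writing out a proof, leaving it implicit that one combines that lemma with the Rubin conjugator lemma in precisely the way you describe. Your care in noting that Lemma~\ref{lem:anticonjugacy} rules out conjugacy by an arbitrary homeomorphism (not just by elements of $V_n$), so that the Rubin conjugator $\psi$ is covered, is the only subtlety, and you have handled it correctly.
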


%%%%%%%%%%
%%%%%%%%%%

In the case when $G$ is semiregular, we can build the conjugating homeomorphism from Rubin's Theorem
using the transducers which we will see in the next section.  Henceforth, $H$ will represent a semiregular
subgroup of $S_n$ and $R=\{x_1,...,x_k\}$ 
will be an orbit transversal of $H$, a set of
orbit representatives of the natural action of $H$ on $\{1,2,...,n\}$.

Note that there is a unique
element $h_i\in H$ such that $i\cdot h_i\in R$.  Suppose that there exists a second element $h_i'$ such that 
$i\cdot h_i'\in R$. Then $i\cdot h_i=i\cdot h_i'$ since the orbit representative of $i$ is unique and
therefore $i\cdot h_i'h_i^{-i}=i$.  The only element in $H$ with a fixed point is the identity, meaning $h_i=h_i'$.

The following lemma demonstrates a useful
relationship between $N_{S_n}(H)\cap\text{Stab}_{S_n}(R)$ and $H$, alluded to in 
Theorem \ref{thm:fullthm}. 
Indeed, when $H$ is regular, i.e. both semiregular and transitive, $N_{S_n}(H)\cap\text{Stab}_{S_n}(R)\cong Aut(H)$.  The
peculiar form of multiplication in the following lemma is not unlike twisted conjugacy (particularly when $h=id$) and arises
through conjugation with transducers.  

\begin{lem}\label{lem:semiconjugacy}
Let $G\leq N_{S_n}(H)\cap\text{Stab}_{S_n}(R)$. Then the group $\langle H,G\rangle$ is equal to $HG$ and
for all $h\in H$, $g\in G$, and $x\in X$, \[h_x^{-1}hgh_{x\cdot hg}=g.\]
\end{lem}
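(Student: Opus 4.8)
The plan is to treat the two assertions separately. For the claim that $\langle H, G\rangle = HG$, I would simply use that $G \leq N_{S_n}(H)$ normalizes $H$, so $gH = Hg$ for every $g \in G$ and hence $HG = GH$. A product of two subgroups is itself a subgroup precisely when it is closed under multiplication, which $HG = GH$ guarantees; since this set contains both $H$ and $G$ and is contained in any subgroup containing them, it coincides with $\langle H, G\rangle$.

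The substance is the identity $h_x^{-1} h g h_{x\cdot hg} = g$, which I would recast as the assertion that
\[
q := h_x^{-1}\, h\, g\, h_{x\cdot hg}\, g^{-1}
\]
is the identity permutation. First I observe that $q \in H$: the factors $h_x^{-1}$ and $h$ lie in $H$, and because $g$ normalizes $H$ we have $g\,h_{x\cdot hg}\,g^{-1} \in gHg^{-1} = H$, so $q$ is a product of elements of $H$. The key idea is then to exhibit a fixed point of $q$ and invoke semiregularity: as noted just before the lemma, the only element of $H$ with a fixed point is the identity, so any $q \in H$ fixing a point must be trivial.

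The point to track is the orbit representative $r := x\cdot h_x \in R$, which lies in $R$ by the defining property of $h_x$. Composing left to right, I would compute $r\cdot q$ step by step: $r\cdot h_x^{-1} = x$; applying $h$ and then $g$ gives $x\cdot hg$; applying $h_{x\cdot hg}$ sends $x\cdot hg$ into $R$ by the defining property of $h_{x\cdot hg}$; and finally $g^{-1}$ preserves $R$ since $g \in \text{Stab}_{S_n}(R)$. Hence $r\cdot q$ again lies in $R$. But $q \in H$ forces $r$ and $r\cdot q$ to lie in a common $H$-orbit, and because $R$ is an orbit transversal it meets each orbit in exactly one point; therefore $r\cdot q = r$, and $q$ fixes $r$. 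Semiregularity now yields $q = \mathrm{id}$, which is exactly the desired identity.

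The main obstacle I anticipate is purely bookkeeping: one must keep the left-to-right composition convention straight throughout the computation of $r\cdot q$ and apply the uniqueness of each $h_j$ at the right moment. Conceptually there is little difficulty once one recognizes that the entire expression lies in $H$ and that a single correctly chosen orbit representative suffices to detect triviality.
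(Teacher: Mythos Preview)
Your proof is correct. Both you and the paper use the same core mechanism---show that a certain element lies in $H$ and has a fixed point in $R$, then invoke semiregularity---but the paper structures the argument differently: it first proves the two special cases $h_x^{-1}\,h\,h_{x\cdot h}=\mathrm{id}$ (for $h\in H$) and $h_x^{-1}\,g\,h_{x\cdot g}=g$ (for $g\in G$) separately, and then combines them via the factorisation
\[
h_x^{-1}\,hg\,h_{x\cdot hg} = (h_x^{-1}\,h\,h_{x\cdot h})(h_{x\cdot h}^{-1}\,g\,h_{(x\cdot h)\cdot g}).
\]
Your one-shot argument with $q=h_x^{-1} h g h_{x\cdot hg} g^{-1}$ is more direct and avoids this intermediate bookkeeping; the paper's decomposition, on the other hand, isolates the identity $h_x^{-1} g h_{x\cdot g}=g$ as a standalone fact, which it reuses later when analysing how $[\emptyset]_g$ interacts with the transducers $A_{h_x}$.
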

\begin{proof}
Let $G\leq N_{S_n}(H)\cap\text{Stab}_{S_n}(R)$. This implies $\langle H,G\rangle\leq N_{S_n}(H)$ but $G\cap H=\{id\}$
since only the identity of $H$ can stabilize a point.  Among other things, this shows that $\langle H,G\rangle=HG$.

Next, let $h\in H$, $x\in X$, and $r\in R$ be the orbit representative of $x$. Consider the action of $h_x^{-1}hh_{x\cdot h}$ on $r$.
\[
r\cdot h_x^{-1}hh_{x\cdot h} = (x\cdot h)\cdot h_{x\cdot h}\in R 
\]
Since $r$ is the representative of its own orbit and
$h_x^{-1}hh_{x\cdot h}$ maps $r$ to itself, the element $h_x^{-1}hh_{x\cdot h}\in H$ must be the identity.

Let $g\in G$ and $x\in X$. Because $g\in N_{S_n}(H)$, conjugating $h_x$ by $g$ gives $g^{-1}h_xg=h_y$ 
for some $y\in X$.
Let $r\in R$ be the orbit representative of $x$ and consider the action of $h_y$ on $x\cdot g$, we have
\begin{eqnarray*}
(x\cdot g)\cdot h_y&=&(x\cdot g)\cdot g^{-1}h_xg\\
&=&x\cdot h_x g\\
&=& r\cdot g \in R 
\end{eqnarray*}
since $g\in \text{Stab}_{S_n}(R)$.  This shows that $h_y$ maps $x\cdot g$ into $R$, i.e. $h_y=h_{x\cdot g}$. Therefore
\[h_x^{-1}gh_{x\cdot g}=g.\]
Putting these together gives
\begin{eqnarray*}
h_x^{-1}\;hg\;h_{x\cdot hg}&=&(h_x^{-1}\;h\;h_{x\cdot h})(h_{x\cdot h}^{-1}\;g\;h_{(x\cdot h)\cdot g})\\
&=&g.
\end{eqnarray*}
\end{proof}

\section{Rubin conjugators as transducers}
In this section we construct transducers which generate the Rubin conjugators realising isomorphisms between $V_n(GH)$ and $V_n(G)$ for $H$ semiregular and appropriate $G$.  They are built from the semiregular group $H$
that we have added to $V_n$ and a choice $R$ of orbit transversal for the action of $H$.   We also give some specific examples and make several general calculations to describe the Rubin conjugator's interaction with elements of $V_n(GH)$.   The isomorphism of Theorem \ref{thm:fullthm} is proven in Section \ref{sec:Isomorphism}.

Remembering that $H$ is semiregular, $R$ is an orbit transversal of $H$, and letting $h\in H$, 
define the synchronous transducer $A_{H,R,h}$ as
\begin{eqnarray}
A_{H,R,h}&=&\langle\{1,2,...,n\},\{1,2,...n\},H,\pi,\lambda, h \rangle. \label{def:transducer}
\end{eqnarray}
The rewrite function $\lambda$ and transition function $\pi$ are defined
for all $i\in X=\{1,2,...,n\}$ and $g\in H$ as 
\[(i,g)\lambda=i\cdot g\quad\text{and}\quad(i,g)\pi=h_i,\]
where $h_i$ is the unique
element such that $i\cdot h_i\in R$. 
Since $H$ and $R$ are understood, we simplify the notation so that $A_{H,R,h}=A_h$.  

The inverse of the transducer, $A_{h}^{-1}$, can be described as
\[A_{h}^{-1}=\langle\{1,2,...,n\},\{1,2,...n\},H,\pi',\lambda', h \rangle.\]
The `inverse' rewrite function $\lambda'$ and `inverse' transition function $\pi'$ are defined
for all $i\in X\{1,2,...,n\}$ and $g\in H$ as 
\[(i,g)\lambda'=i\cdot g^{-1}\quad\text{and}\quad(i,g)\pi'=h_{i\cdot g^{-1}}.\]
It is useful (and interesting) to note that $\pi$ does not depend
on the current state of the
transducer, i.e. $(i,g)\pi=(i,h)\pi$ for all $i\in X$ and $g,h\in H$, whereas $\pi'$
does depend on the state.  One way to approach this idea is to see that $(i,h)\pi$ depends on
the input letter $i$ but $(i,h)\pi'$ depends on the output letter of $\lambda'$,
$i\cdot h^{-1}$.

{\flushleft{\it An example conjugating $V_2(S_2)$ to $V_2$}}:\\
As an example, consider the group $V_2(S_2)$ where $S_2$ is the two element semiregular permutation group 
$S_2=\langle(1\;2)\rangle$ acting on $\{1,2\}$ with orbit transversal $R=\{1\}$.  We build the explicit transducer giving the Rubin conjugator that takes $V_2(S_2)$ to $V_2$.

This transducer will have two states, $id$ and $(1\;2)$, and both states act on input letters as the permutation for which they are named. An input
of 1 will send the transducer to the state $id$, since $1\cdot id\in R$, and 2 as an input
will send it to the state $(1 \;2)$.  This can be represented pictorially in Figure \ref{fig:twostate}, with labeled 
circles as states, the arrows representing transitions between states, and the labels on arrow
giving the input on the left and output on the right.

\begin{figure}[H]
\centering
\begin{tikzpicture}[->,auto]
                 \node at (0,0) [circle, draw,minimum size=1cm] (A) {id};
                 \node at (2,0) [circle, draw,minimum size=1cm] (B) {(12)};
                 \draw[->] (A) to [out=45,in=135]      node {2/2} (B);
                 \draw[->] (B) to [out=225,in=315]     node {1/2} (A);
                 \draw[->] (A) to [loop left]          node {1/1} (A);
                 \draw[->] (B) to [loop right]         node {2/1} (B);
\end{tikzpicture}
\caption{The transducer for $H=\langle(1\;2)\rangle\leq S_2$ with $R=\{1\}$ and the start state unspecified.}\label{fig:twostate}
\end{figure}
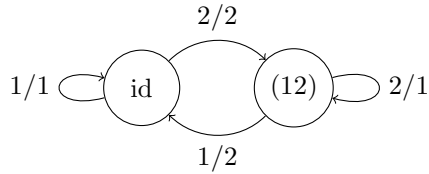

The inverse of the last transducer can be seen in Figure \ref{fig:twostateinv}.  Note that state you are sent to depends on the
output of the rewrite rule and not the input.

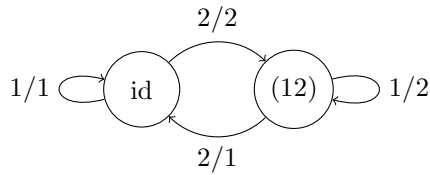
\begin{figure}[H]
\centering
\begin{tikzpicture}[->,auto]
                 \node at (0,0) [circle, draw,minimum size=1cm] (A) {id};
                 \node at (2,0) [circle, draw,minimum size=1cm] (B) {(12)};
                 \draw[->] (A) to [out=45,in=135]      node {2/2} (B);
                 \draw[->] (B) to [out=225,in=315]     node {2/1} (A);
                 \draw[->] (A) to [loop left]          node {1/1} (A);
                 \draw[->] (B) to [loop right]         node {1/2} (B);
\end{tikzpicture}
\caption{The inverse of the transducer for $H=\langle(1\;2)\rangle\leq S_2$ with $R=\{1\}$ and the start state unspecified.}\label{fig:twostateinv}
\end{figure}

{\flushleft{\it An example conjugating $V_3(G\;\!C_3)$ to $V_3(G)$}}:\\
Figure \ref{fig:triangular} depicts another transducer which is constructed to produce a homeomorphism which will conjugate the group $V_3(G\;\!C_3)$ to $V_3(G)$, where the group $C_3$ is specifically the semiregular cyclic group of order three in $S_3$; $C_3=\langle (1\;2\;3)\rangle$.  To build our transducer we choose  $R=\{1\}$ for our orbit transversal.  Note that appropriate $G$ must stabilise $R=\{1\}$ and normalise the (already normal) subgroup $C_3$.

\begin{figure}[H]
\centering
\begin{tikzpicture}[->,auto]
                 \node at (0,0) [circle, draw,minimum size=1cm] (A) {id};
                 \node at (2,3) [circle, draw,minimum size=1cm] (B) {(132)};
                 \node at (4,0) [circle, draw,minimum size=1cm] (C) {(123)};
                 \draw[->] (A) to [out=90,in=180]     node {2/2} (B);
                 \draw[->] (B) to [out=210,in=60]     node {1/3} (A);
                 \draw[->] (A) to [out=15,in=165]     node {3/3} (C);
                 \draw[->] (C) to [out=195,in=345]    node {1/2} (A);
                 \draw[->] (B) to [out=0,in=90]       node {3/2} (C);
                 \draw[->] (C) to [out=120,in=330]    node {2/3} (B);
                 \draw[->] (A) to [loop left]         node {1/1} (A);
                 \draw[->] (B) to [loop above]        node {2/1} (B);
                 \draw[->] (C) to [loop right]        node {3/1} (C);
\end{tikzpicture}
\caption{The transducer for $C_3=\langle(1\;2\;3)\rangle\leq S_3$ with $R=\{1\}$ and the start state unspecified.}\label{fig:triangular}
\end{figure}
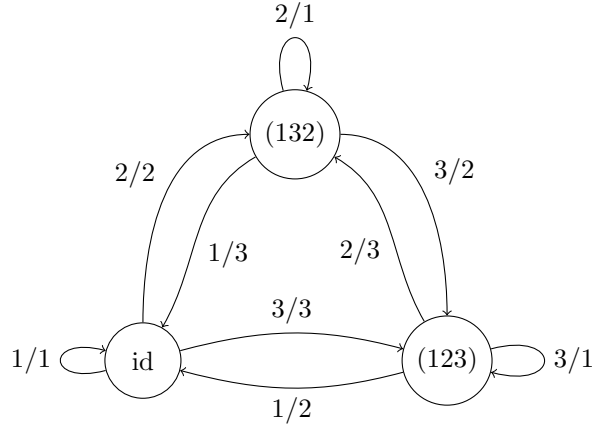

To see that the transducers $A_{H,R,id}$ constructed along the lines above for specific semiregular $H$ and choice of $R$ produce the advertised isomorphisms between $V_n(GH)$ and $V_n(G)$ for semiregular $H$ and appropriate $G$, we look at where the generators of $V_n(G)$ and $V_n(GH)$ are taken under conjugation. The following lemmas are calculations to assist in computing conjugation by the homeomorphisms produced by such transducers.

%%%%%%%%%%
%%%%%%%%%%

\begin{lem}
Let $h,g\in H$. Then $A_h=A_g\lfloor\emptyset\rfloor_{g^{-1}h}$.
\end{lem}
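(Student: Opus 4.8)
The plan is to prove the identity by a direct computation of the action of each side on an arbitrary infinite word $\alpha = a_1 a_2 a_3 \cdots$, exploiting the observation already highlighted in the text that the transition function $\pi$ of the transducer does not depend on the current state. First I would unwind the recursive definition of the action of $A_g$ on $\alpha$ starting from state $g$: the first output letter is $(a_1,g)\lambda = a_1\cdot g$, and the transducer then moves to state $(a_1,g)\pi = h_{a_1}$; thereafter each successive output letter is $(a_{k+1},h_{a_k})\lambda = a_{k+1}\cdot h_{a_k}$, with the state evolving as $h_{a_k}$ for $k\geq 1$. This yields
\[
\alpha\cdot A_g = (a_1\cdot g)\|(a_2\cdot h_{a_1})\|(a_3\cdot h_{a_2})\|\cdots.
\]
The crucial point is that, because $(i,g)\pi = h_i$ is independent of the state, every letter after the first is determined solely by the input word and is completely independent of the start state.

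Running the identical computation for $A_h$ produces the same output except in the first coordinate, where $a_1\cdot g$ is replaced by $a_1\cdot h$; all later letters agree verbatim. Next I would record the action of the prefix permutation: by its definition $\lfloor\emptyset\rfloor_{g^{-1}h}$ sends $[i]$ to $[i\cdot(g^{-1}h)]$ and leaves the remainder of the word untouched, that is, $(b_1 b_2 b_3\cdots)\cdot\lfloor\emptyset\rfloor_{g^{-1}h} = (b_1\cdot g^{-1}h)\|b_2\|b_3\|\cdots$.

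Composing left to right, per the paper's convention, I would apply $\lfloor\emptyset\rfloor_{g^{-1}h}$ to the output of $A_g$. This alters only the first letter, sending $a_1\cdot g$ to $(a_1\cdot g)\cdot(g^{-1}h) = a_1\cdot h$, while leaving $(a_2\cdot h_{a_1})\|(a_3\cdot h_{a_2})\|\cdots$ fixed. The resulting word is exactly $\alpha\cdot A_h$, and since $\alpha$ was arbitrary, the two homeomorphisms coincide, giving $A_h = A_g\lfloor\emptyset\rfloor_{g^{-1}h}$. There is no substantive obstacle here; the only point requiring care is the bookkeeping of the composition order and confirming the cancellation $g\cdot(g^{-1}h) = h$ in the first coordinate, which is precisely why the correcting factor must be $g^{-1}h$ and not its inverse.
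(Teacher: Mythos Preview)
Your proof is correct and follows essentially the same approach as the paper: both compute the action on an arbitrary infinite word and use the key fact that $(i,g)\pi=h_i$ is independent of the state, so that $A_g$ and $A_h$ differ only in the first output letter, which $\lfloor\emptyset\rfloor_{g^{-1}h}$ then corrects. The paper's version is slightly more compact in that it keeps the tail $(x_2x_3\ldots)\cdot A_{(x_1,g)\pi}$ unexpanded rather than writing out the full sequence, but the argument is the same.
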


\begin{proof}
Let $\chi=x_1x_2x_3\ldots\in X^{\omega}$, let $g,h\in H$ and consider $\chi\cdot A_g\lfloor\emptyset\rfloor_{g^{-1}h}$,
\begin{eqnarray*}
\chi\cdot A_g\lfloor\emptyset\rfloor_{g^{-1}h}&=&(x_1\cdot g\|(x_2x_3...)\cdot A_{(x_1,g)\pi})\cdot\lfloor\emptyset\rfloor_{g^{-1}h}\\
&=&(x_1\cdot gg^{-1}h)\|\big((x_2x_3...)\cdot A_{(x_1,g)\pi}\big)\\
&=&(x_1\cdot h)\|\big((x_2x_3...)\cdot A_{(x_1,h)\pi}\big)\\
&=&\chi\cdot A_{h}.
\end{eqnarray*}
since $(x_1,g)\pi=(x_1,h)\pi$.
\end{proof}

%%%%%%%%%%
%%%%%%%%%%

\begin{lem}
Let $h,g\in H$.  Then $A_{h}=[\emptyset]_{hg^{-1}}A_{g}$.
\end{lem}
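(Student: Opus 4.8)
The plan is to compute directly with the recursive definition of the transducer action, in the style of the two preceding lemmas, but to strengthen the statement so that an induction closes. Write $k=hg^{-1}$, so that $kg=h$; then the claim $A_h=[\emptyset]_{hg^{-1}}A_g$ is the special case of the family of identities $A_{kg}=[\emptyset]_k A_g$, which I would prove for all $k,g\in H$ simultaneously. Proving the whole family at once is essential because, as letters are consumed, the natural recursion leaves $k$ fixed but changes the second index $g$, so the inductive hypothesis must already quantify over that index. The key algebraic input is a consequence of semiregularity: for every letter $i$ and every $k\in H$ one has $h_{i\cdot k}=k^{-1}h_i$. Indeed $i$ and $i\cdot k$ lie in the same $H$-orbit and hence share the orbit representative $r\in R$, so $i\cdot h_i=r=(i\cdot k)\cdot h_{i\cdot k}$; rearranging gives $i\cdot(k\,h_{i\cdot k}h_i^{-1})=i$, and as the only element of $H$ fixing a point is the identity, $k\,h_{i\cdot k}=h_i$, which is the asserted relation.

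With this in hand I would show $\rho\cdot[\emptyset]_k A_g=\rho\cdot A_{kg}$ for every finite word $\rho$ by induction on $|\rho|$, uniformly in $k,g\in H$. For $\rho=x_1\rho'$, applying $[\emptyset]_k$ replaces each letter $x_j$ by $x_j\cdot k$, and feeding the result into $A_g$ emits the first output letter $(x_1\cdot k)\cdot g=x_1\cdot kg$ and passes to state $(x_1\cdot k,g)\pi=h_{x_1\cdot k}$. Using $h_{x_1\cdot k}=k^{-1}h_{x_1}$ together with $(x_2\cdot k)(x_3\cdot k)\cdots=(x_2x_3\cdots)\cdot[\emptyset]_k$, the remaining tail is $\rho'\cdot[\emptyset]_k A_{k^{-1}h_{x_1}}$, to which the induction hypothesis applies (with second index $k^{-1}h_{x_1}$ in place of $g$), giving $\rho'\cdot A_{k\cdot k^{-1}h_{x_1}}=\rho'\cdot A_{h_{x_1}}$. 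This agrees exactly with the output of $A_{kg}$ on $\rho$, since $A_{kg}$ emits $x_1\cdot kg$ and passes to state $(x_1,kg)\pi=h_{x_1}$, independent of the current state. Taking $k=hg^{-1}$, and noting that for synchronous transducers the $m$-th output letter depends only on the first $m$ input letters, agreement on all finite prefixes yields equality of the two homeomorphisms on all of $\CS_n$.

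The main obstacle is that, unlike the previous lemma where the permuting factor $\lfloor\emptyset\rfloor_{g^{-1}h}$ alters only the first letter and the recursion terminates after a single step, here $[\emptyset]_{hg^{-1}}$ acts at every depth, so no one-step computation closes the argument. The two ingredients above are precisely what resolves this: the semiregularity identity $h_{i\cdot k}=k^{-1}h_i$ lets the iterated permutation be absorbed into a change of state rather than accumulating down the word, and quantifying over the second index accommodates the \emph{twisted} reindexing $g\mapsto k^{-1}h_{x_1}$ that appears in the tail (the same twisting flagged after Lemma \ref{lem:semiconjugacy}). Once these are set up, the induction is routine.
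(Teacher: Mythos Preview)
Your proof is correct, and the key identity $h_{i\cdot k}=k^{-1}h_i$ (equivalently $k\,h_{i\cdot k}=h_i$) is precisely what the paper uses, stated there as $hg^{-1}h_{x\cdot hg^{-1}}=h_x$. The paper, however, does not set up an induction: because the transition function $\pi$ is state-independent, the entire output of $[\emptyset]_{hg^{-1}}A_g$ on $\chi=x_1x_2x_3\ldots$ can be written out at once as $(x_1\cdot hg^{-1}g)\|(x_2\cdot hg^{-1}h_{x_1\cdot hg^{-1}})\|(x_3\cdot hg^{-1}h_{x_2\cdot hg^{-1}})\|\cdots$, and the identity is applied at each position to match $\chi\cdot A_h=(x_1\cdot h)\|(x_2\cdot h_{x_1})\|(x_3\cdot h_{x_2})\|\cdots$. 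So contrary to your remark that ``no one-step computation closes the argument,'' the paper does exactly that; your induction on prefix length, with the extra quantification over $g$, is a more formal repackaging of the same calculation, and the quantification over $g$ is in fact unnecessary once one notes that after the first letter both transducers are in state $h_{x_1}$ regardless of their initial states.
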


\begin{proof}
Let $\chi=x_1x_2x_3\ldots\in X^{\omega}$ and let $g,h\in H$. The permutation $hg^{-1}h_{x\cdot hg^{-1}}$ is an 
element of $H$ and takes the letter $x$ into the orbit transversal $R$.  This implies that
\begin{eqnarray*}
hg^{-1}h_{x\cdot hg^{-1}}&=&h_x.
\end{eqnarray*}
Now consider $\chi\cdot [\emptyset]_{hg^{-1}}A_{g}$, in this case
\begin{eqnarray*}
\chi\cdot [\emptyset]_{hg^{-1}}A_{g}&=&(x_1x_2x_3...)\cdot [\emptyset]_{hg^{-1}}A_{g}\\
&=&(x_1\cdot hg^{-1}g)\| (x_2\cdot hg^{-1}h_{x_1\cdot hg^{-1}})\| (x_3\cdot hg^{-1}h_{x_2\cdot hg^{-1}})\ldots\\
&=&(x_1\cdot h) \| (x_2\cdot h_{x_1})\| (x_3\cdot h_{x_2})\ldots\\
&=&\chi\cdot A_h.
\end{eqnarray*}
\end{proof}

%%%%%%%%%%
%%%%%%%%%%
The following lemma now applies to calculations in $V_n(GH)$.
\begin{lem}
Let $x\in X$ (so that $h_x\in H$) and $g\in N_{S_n}(H)\cap\text{Stab}_{S_n}(R)$.  
Then $[\emptyset]_{g}A_{h_x}=A_{h_{x\cdot g^{-1}}}[\emptyset]_{g}$.
\end{lem}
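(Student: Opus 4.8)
The plan is to verify the identity at the level of the underlying homeomorphisms by evaluating both sides on an arbitrary point $\chi = x_1 x_2 x_3 \ldots \in X^\omega$ and checking that the two output words agree letter by letter. Two facts drive the computation. First, unwinding the recursive definition of the transducer gives, for any start state $s \in H$, the formula $\chi \cdot A_s = (x_1 \cdot s)\|(x_2 \cdot h_{x_1})\|(x_3 \cdot h_{x_2})\|\cdots$, since the transition $(i,s)\pi = h_i$ depends only on the input letter and not on the current state. Second, $[\emptyset]_g$ simply applies $g$ to every letter, so $\chi \cdot [\emptyset]_g = (x_1 \cdot g)\|(x_2 \cdot g)\|\cdots$. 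The single extra ingredient is the conjugation identity $g^{-1} h_w g = h_{w \cdot g}$ for every $w \in X$, which is exactly what was established inside the proof of Lemma \ref{lem:semiconjugacy} from the hypotheses $g \in N_{S_n}(H)$ and $g \in \text{Stab}_{S_n}(R)$.

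Concretely, I would first compute the left-hand side $\chi \cdot [\emptyset]_g A_{h_x}$: feeding the word $(x_1 \cdot g)(x_2 \cdot g)\cdots$ into $A_{h_x}$ produces the letter $x_1 \cdot (g\,h_x)$ in the first position and $x_k \cdot (g\, h_{x_{k-1}\cdot g})$ in position $k \geq 2$. I would then compute the right-hand side $\chi \cdot A_{h_{x\cdot g^{-1}}}[\emptyset]_g$: applying the transducer first yields $x_1 \cdot h_{x\cdot g^{-1}}$ and $x_k \cdot h_{x_{k-1}}$, and post-composing with $[\emptyset]_g$ multiplies each letter on the right by $g$, giving $x_1 \cdot (h_{x\cdot g^{-1}}\,g)$ and $x_k \cdot (h_{x_{k-1}}\,g)$. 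It then remains to match the permutations acting on each $x_k$. For $k \geq 2$ this is immediate from the conjugation identity in the form $g\,h_{x_{k-1}\cdot g} = h_{x_{k-1}}\,g$. For the first letter I would apply the same identity at the argument $w = x\cdot g^{-1}$, which rearranges to $h_{x\cdot g^{-1}} = g\,h_x\,g^{-1}$ and hence $h_{x\cdot g^{-1}}\,g = g\,h_x$, matching the left-hand side.

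The main obstacle is organizational rather than substantive: the first output letter of a transducer is governed by its start state, whereas every later letter is governed by the transition function applied to the previous input letter. This forces a genuine split into the initial position and the generic position, and the two positions require the conjugation identity evaluated at different arguments (namely $x\cdot g^{-1}$ versus $x_{k-1}$). The start states $h_x$ on the left and $h_{x\cdot g^{-1}}$ on the right are precisely calibrated so that the first-letter discrepancy cancels; keeping the right-versus-left action conventions straight while tracking this offset is the only delicate point. Once both positions are matched, the output words coincide at every coordinate, so the two compositions define the same homeomorphism of $\CS_n$ and the lemma follows.
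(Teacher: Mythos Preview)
Your argument is correct and is essentially the paper's own proof: both evaluate the two homeomorphisms on an arbitrary $\chi=x_1x_2x_3\ldots$, unwind the transducer to $(x_1\cdot s)\|(x_2\cdot h_{x_1})\|(x_3\cdot h_{x_2})\|\cdots$, and use the identity from Lemma~\ref{lem:semiconjugacy} (in the equivalent form $h_w\,g = g\,h_{w\cdot g}$) to match letters, treating the initial position separately. The only cosmetic difference is that you compute both sides and compare, whereas the paper manipulates the left-hand side directly into the right-hand side by inserting $h_w h_w^{-1}$ at each coordinate.
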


\begin{proof}
Let $\chi=x_1x_2x_3\ldots\in X^{\omega}$, $x\in X$, and let $g\in N_{S_n}(H)\cap\text{Stab}_{S_n}(R)$. Recall that $h_x^{-1}gh_{x\cdot g}=g$
from Lemma \ref{lem:semiconjugacy}.  
Now consider $\chi\cdot [\emptyset]_{g}A_{h_x}$, we have
\begin{eqnarray*}
\chi\cdot [\emptyset]_{g}A_{h_x}&=&(x_1x_2x_3...)\cdot [\emptyset]_{g}A_{h_x}\\
&=&(x_1\cdot gh_x)\| (x_2\cdot gh_{x_1\cdot g})\| (x_3\cdot gh_{x_2\cdot g})\ldots\\
&=&(x_1\cdot h_{x\cdot g^{-1}}h_{x\cdot g^{-1}}^{-1}gh_x)\| 
(x_2\cdot h_{x_1}h_{x_1}^{-1}gh_{x_1\cdot g})\| (x_3\cdot h_{x_2}h_{x_2}^{-1}gh_{x_2\cdot g})\ldots\\
&=&(x_1\cdot h_{x\cdot g^{-1}}g)\| (x_2\cdot h_{x_1}g)\| (x_3\cdot h_{x_2}g)\ldots\\
&=&\chi\cdot A_{h_{x\cdot g^{-1}}} [\emptyset]_g.
\end{eqnarray*}
\end{proof}

%%%%%%%%%%
%%%%%%%%%%
\section{Isomorphism}\label{sec:Isomorphism}

The following theorem shows the isomorphisms between different Thompson-like groups $V_n(G)$ for a fixed $n$
via conjugation by $A_{id}:=A_{H,R,id}$ constructed for a specific semiregular group $H$ and choice of transversal $R$.

\setcounter{thm}{1}
\begin{thm}\label{thm:isomorphism}
Let $H\leq S_n$ be semiregular, $R$ be an orbit transversal of $H$, and $A_{id}$ be the usual transducer.
Then for all $G\leq N_{S_n}(H)\cap\text{Stab}_{S_n}(R)$, the set $HG$ is a group and
the mapping $\phi:V_n(HG)\to V_n(G)$ 
defined by $(v)\phi=A_{id}^{-1}vA_{id}$ is an isomorphism. 
\end{thm}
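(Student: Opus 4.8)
The plan is to exploit that $\phi$ is conjugation by the fixed homeomorphism $A_{id}$ of $\CS_n$, so that $\phi$ is automatically an injective homomorphism into the full homeomorphism group; the entire content of the theorem is then that its image is exactly $V_n(G)$. That $HG$ is a group is immediate from Lemma~\ref{lem:semiconjugacy}. I would record at the outset the structural feature of $A_{id}$ that drives everything: since $A_{id}$ is synchronous it maps each length-$m$ cone $[\theta]$ bijectively onto the length-$m$ cone $[\hat\theta]$, where $\hat\theta:=\theta\cdot A_{id}$, and its restriction there is prefix replacement $\theta\mapsto\hat\theta$ followed, on tails, by the transducer with start state $(\theta,id)\pi=h_{a}$, where $a$ is the last letter of $\theta$; the same holds for $A_{id}^{-1}$. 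This \emph{self-similar} description, together with the three relations already proved in this section (in the notation $g=id$ they read $A_h=[\emptyset]_hA_{id}$, $A_h=A_{id}\lfloor\emptyset\rfloor_h$, and $[\emptyset]_gA_{h_x}=A_{h_{x\cdot g^{-1}}}[\emptyset]_g$), is all that the computations require.

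To prove $\phi(V_n(HG))\subseteq V_n(G)$ I would check $\phi$ on the generating set $V_n\cup\{[\alpha]_h\}\cup\{[\alpha]_g\}$, first using $[\alpha]_{k_1k_2}=[\alpha]_{k_1}[\alpha]_{k_2}$ to reduce the iterated permutations to those with exponent in $H$ or in $G$. The root cases are pure algebra: combining $A_h=[\emptyset]_hA_{id}$ with $A_h=A_{id}\lfloor\emptyset\rfloor_h$ gives $\phi([\emptyset]_h)=A_{id}^{-1}[\emptyset]_hA_{id}=\lfloor\emptyset\rfloor_h\in V_n$, the key \emph{unwinding} that turns an infinitely iterated $H$-permutation into a single first-level permutation; and applying the third relation at any $x\in R$ (so $h_x=id$ and, since $g$ stabilises $R$, also $h_{x\cdot g^{-1}}=id$) gives $\phi([\emptyset]_g)=[\emptyset]_g$. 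For a general node $\alpha=a_1\cdots a_m$ I would localise: by the self-similar description, conjugation by $A_{id}$ carries $[\alpha]_h$ to a map supported on $[\hat\alpha]$ whose tail action is $A_{h_{a_m}}^{-1}[\emptyset]_hA_{h_{a_m}}$, which by the root computation equals the first-level permutation $\lfloor\emptyset\rfloor_{h_{a_m}^{-1}hh_{a_m}}$, whence $\phi([\alpha]_h)=\lfloor\hat\alpha\rfloor_{h_{a_m}^{-1}hh_{a_m}}\in V_n$. The same localisation applied to $[\alpha]_g$ reduces its tail action, via the third relation, to a composite of a first-level permutation and $[\emptyset]_g$, so that $\phi([\alpha]_g)$ is a product of a prefix substitution and an iterated $G$-permutation at $\hat\alpha$, hence lies in $V_n(G)$. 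Finally, for $v\in V_n$ I would argue that $v$ is the identity off finitely many cones and acts by prefix replacement, so conjugating by the synchronous, cone-preserving $A_{id}$ again yields a homeomorphism that is the identity off finitely many cones and maps cones to cones by prefix replacement up to a first-level \emph{twist} $\lfloor\emptyset\rfloor_{h^{-1}h'}$ coming from the mismatch of entering states; such a map is again a finite prefix substitution, so $\phi(v)\in V_n$.

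For the reverse containment $V_n(G)\subseteq\phi(V_n(HG))$ I would run the mirror-image argument for $\phi^{-1}(v)=A_{id}vA_{id}^{-1}$ on the generators $V_n\cup\{[\alpha]_g\}$ of $V_n(G)$: the cone-preserving self-similar description of $A_{id}^{-1}$ shows $\phi^{-1}(v)\in V_n$ for $v\in V_n$, while the localisation together with the third relation read backwards shows $\phi^{-1}([\alpha]_g)$ is a product of a prefix substitution and an iterated $G$-permutation at the node $\alpha\cdot A_{id}^{-1}$, hence lies in $V_n(HG)$. Combining the two containments yields $\phi(V_n(HG))=V_n(G)$, as required.

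The main obstacle is not the root identities, which are immediate from the three relations, but the passage to arbitrary nodes and to the prefix substitutions in $V_n$: both rest on the self-similar restriction of $A_{id}$ to a cone and on careful bookkeeping of the state $h_{a}$ by which the transducer enters each cone, and the $V_n$ case additionally needs the observation that conjugation keeps the support finite, so that the result is genuinely a prefix substitution rather than an element acting nontrivially at arbitrarily great depth. The reverse direction is the most delicate point, since the inverse transducer $A_{id}^{-1}$ has a transition function $\pi'$ that depends on the current state, so the entering-state bookkeeping there must be carried out through the output letters rather than the input letters.
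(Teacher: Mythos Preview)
Your approach is essentially the paper's: check $\phi$ and $\phi^{-1}$ on generators using the self-similar restriction of $A_{id}$ to cones together with the three relations. The paper uses small swaps as generators of $V_n$ and handles $[\rho]_{hg}$ in a single computation, whereas you treat general prefix substitutions and separate $[\alpha]_h$ from $[\alpha]_g$ via $[\alpha]_{hg}=[\alpha]_h[\alpha]_g$; these are cosmetic differences.

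There is, however, one incorrect claim. You assert that $\phi^{-1}(v)\in V_n$ for $v\in V_n$, but this is false. When you conjugate a prefix substitution by $A_{id}^{-1}$, the tail mismatch on each piece is $A_{h_1}A_{h_2}^{-1}$ (with $h_1,h_2$ the states reached via $\pi'$), and this simplifies via $A_h=[\emptyset]_hA_{id}$ to the \emph{iterated} permutation $[\emptyset]_{h_1h_2^{-1}}$, not to a first-level permutation $\lfloor\emptyset\rfloor_{h_1h_2^{-1}}$. The asymmetry you flag in your last paragraph is precisely this: for $\phi$ the useful relation is $A_h=A_{id}\lfloor\emptyset\rfloor_h$, which produces floor permutations, while for $\phi^{-1}$ the relation that cancels is $A_h=[\emptyset]_hA_{id}$, which produces iterated permutations. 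Thus $\phi^{-1}(V_n)\subseteq V_n(H)$, and in general the containment in $V_n$ fails (indeed, if it held, $\phi$ would restrict to an automorphism of $V_n$ and then the forward computation $\phi([\alpha]_h)\in V_n$ would force $[\alpha]_h\in V_n$, contradicting the whole point when $H\neq\{id\}$). This does not break your argument, since $V_n(H)\leq V_n(HG)$ is all you need for surjectivity, but the statement as written should be corrected.
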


\begin{proof}
This proof is split into distinct parts describing where the generators of $V_n(HG)$ and $V_n(G)$ are taken
under conjugation by $A_{id}$ and $A_{id}^{-1}$ respectively. We begin by describing a generating set for $V_n$,
which when combined with iterated permutations will generate $V_n(HG)$ and $V_n(G)$. The generators of $V_n$ we use
are called \emph{small swaps} and are defined to be those elements of $V_n$ that `swap' two incomparable cones, i.e $(\omega_1\;\omega_2)\in V_n$,
such that $[\omega_1]\cup[\omega_2]\neq \CS_n$, and is therefore `small'. See \cite{Gens_of_V_n} for more details.
\\
\\
Small Swaps under Conjugation:\\

Let $\rho_1$ and $\rho_2$ be incomparable words in $X^*$ such that $v=(\rho_1,\rho_2)$ is a small swap.
Recall that
\[
\text{supp}\big((v)\phi\big)=\big(\text{supp}(v)\big)\cdot A_{id}=\big\{[\rho_i\cdot A_{id}]\big|i=1,2\big\}
\]
Let $(\rho_1\cdot A_{id})\|\chi\in X^{\omega}$ be a word in supp$\big((v)\phi\big)$. Then
\begin{eqnarray*}
\big((\rho_1\cdot A_{id})\|\chi\big)\;\cdot (v)\phi &=& (\rho_1\cdot A_{id}\|\chi)\cdot A_{id}^{-1}vA_{id}\\
&=& (\rho_2\cdot A_{id})\|(\chi\cdot A_{(\rho_1,id)\pi}^{-1}A_{(\rho_2,id)\pi})\\
&=& (\rho_2\cdot A_{id})\|(\chi\cdot A_{(\rho_1,id)\pi}^{-1}A_{(\rho_1,id)\pi}\lfloor\emptyset\rfloor_{(\rho_1,id)\pi^{-1}(\rho_2,id)\pi})\\
&=& (\rho_2\cdot A_{id})\|(\chi\cdot \lfloor\emptyset\rfloor_{(\rho_1,id)\pi^{-1}(\rho_2,id)\pi}).
\end{eqnarray*}
This shows that $(v)\phi$ acts as $(\rho_1\cdot A_{id}\;\rho_2\cdot A_{id})\lfloor\rho_2\cdot A_{id}\rfloor_{(\rho_1,id)\pi^{-1}(\rho_2,id)\pi}$
on words with prefix $(\rho_1)A_{id}$.
The case for $\rho_2$ is similar giving
\[(v)\phi=(\rho_1\cdot A_{id}\;\rho_2\cdot A_{id})\lfloor\rho_2\cdot A_{id}\rfloor_{(\rho_1,id)\pi^{-1}(\rho_2,id)\pi}
\lfloor\rho_1\cdot A_{id}\rfloor_{(\rho_2,id)\pi^{-1}(\rho_1,id)\pi}\in V_n.\]
\\
Iterated Permutations under Conjugation:\\

Let $s\in HG$ and $\rho\in\{1,2,...,n\}^*$.  This implies that is a unique $h\in H$ and $g\in G$ such that $s=hg$ and
therefore $[\emptyset]_s=[\emptyset]_h[\emptyset]_g$
Now consider $\big([\rho]_s\big)\phi$.  Again,

\[
\text{supp}\big(([\rho]_s)\phi\big)=\big(\text{supp}([\rho]_s)\big)\cdot A_{id}=[\rho\cdot A_{id}].
\]

Then for a word $(\rho\cdot A_{id})\|\chi\in\supp(([\rho]_s)\phi)$, we have

\begin{align*}
((\rho\cdot A_{id})\|\chi)\cdot \big([\rho]_s\big)\phi=&\big((\rho\cdot A_{id})\|\chi\big)\cdot A_{id}^{-1}[\rho]_s A_{id}\\
=&(\rho\cdot A_{id})\|(\chi\cdot A_{(\rho,id)\pi}^{-1}[\emptyset]_s A_{(\rho,id)\pi})\\
=&(\rho\cdot A_{id})\|(\chi\cdot A_{(\rho,id)\pi}^{-1}[\emptyset]_h[\emptyset]_g A_{(\rho,id)\pi})\\
=&(\rho\cdot A_{id})\|(\chi\cdot A_{(\rho,id)\pi}^{-1}[\emptyset]_h A_{(\rho\cdot g^{-1},id)\pi}[\emptyset]_g)\\
=&(\rho\cdot A_{id})\|(\chi\cdot A_{(\rho,id)\pi}^{-1}A_{h(\rho\cdot g^{-1},id)\pi}[\emptyset]_g)\\
=&(\rho\cdot A_{id})\|(\chi\cdot \lfloor\emptyset\rfloor_{(\rho\cdot,id)\pi^{-1}h(\rho\cdot g^{-1},id)\pi}[\emptyset]_g).\\
\end{align*}

Summing up, we get that
\begin{eqnarray*}
([\rho]_t)\phi&=&\lfloor\rho\cdot A_{id}\rfloor_{(\rho\cdot,id)\pi^{-1}h(\rho\cdot g^{-1},id)\pi}[\rho\cdot A_{id}]_g\in V_n(G).
\end{eqnarray*}

We have now shown that $\phi$ maps $V_n(G)$ into $V_n$ by generators.
To show that $\phi$ is onto, we consider the preimage of a generating set of $V_n(G)$.\\
\\
Small Swaps under Inverse Conjugation:

Let $\rho_1\in X^*$ and $\rho_2\in X^*$ such 
that $v=(\rho_1\;\rho_2)$ is a small swap.  Consider $(v)\phi^{-1}$,

\[
\supp\big((v)\phi^{-1}\big)=\supp(v)\cdot A^{-1}_{id}=\big\{[\rho_i\cdot A^{-1}_{id}]\big| i=1,2 \big\}
\]

and consider the action of $(v)\phi^{-1}$ on the infinite word $\rho_1\cdot A_{id}^{-1}\|\chi$:

\begin{eqnarray*}
(\rho_1\cdot A^{-1}_{id}\|\chi)\cdot(v)\phi&=&(\rho_1\cdot A^{-1}_{id}\|\chi)\cdot
A_{id}(\rho_1\;\rho_2)A^{-1}_{id}\\
&=&(\rho_2\|\chi\cdot A_{(\rho_1,id)\pi'})\cdot A_{id}^{-1}\\
&=&(\rho_2\cdot A^{-1}_{id})\|(\chi\cdot A_{(\rho_1,id)\pi'}A^{-1}_{(\rho_2,id)\pi'})\\
&=&(\rho_2\cdot A^{-1}_{id})\|(\chi\cdot [\emptyset]_{(\rho_1,id)\pi'\big((\rho_2,id)\pi'\big)^{-1}}
A_{(\rho_2,id)\pi'}A^{-1}_{(\rho_2,id)\pi'})\\
&=&(\rho_2\cdot A^{-1}_{id})\|(\chi\cdot [\emptyset]_{(\rho_1,id)\pi'\big((\rho_2,id)\pi'\big)^{-1}}).
\end{eqnarray*}
The case for the cone $[\rho_2\cdot A^{-1}_{id}]$ is similar which gives
\[
(v)\phi^{-1}=\big(\rho_1\cdot A^{-1}_{id}\;\rho_2\cdot A^{-1}_{id}\big)
[\rho_1\cdot A^{-1}_{id}]_{(\rho_2,id)\pi'\big((\rho_1,id)\pi'\big)^{-1}}
[\rho_2\cdot A^{-1}_{id}]_{(\rho_1,id)\pi'\big((\rho_2,id)\pi'\big)^{-1}}\in V_n(H)\leq V_n(GH).
\]
\\
Iterated Permutations under Inverse Conjugation:\\

The last generators of $V_n(G)$ are the iterated permutations $[\rho]_g$ where $\rho\in X^*$
and $g\in G$.  By examining the previous calculations on iterated permutations, we see
\begin{eqnarray*}
([\rho\cdot A^{-1}_{id}]_g)\phi&=&\lfloor\rho\rfloor_{(\rho\cdot A^{-1}_{id},id)\pi^{-1}g(\rho\cdot A^{-1}_{id},id)\pi g^{-1}}
[\rho]_g.\\
\end{eqnarray*}
Since $\lfloor\rho\rfloor_{(\rho\cdot A^{-1}_{id},id)\pi^{-1}g(\rho\cdot A^{-1}_{id},id)\pi g^{-1}}$ is in $V_n$
and can therfore be expressed as a product of small swaps, applying $\phi^{-1}$ gives
\[([\rho]_g)\phi^{-1}=[\rho\cdot A^{-1}_{id}]_g
\big(\lfloor\rho\rfloor_{(\rho\cdot A^{-1}_{id},id)\pi^{-1}g(\rho\cdot A^{-1}_{id},id)\pi g^{-1}}^{-1}\big)\phi^{-1}\in V_n(GH).\]

All together, $\phi$ has been shown to be an onto mapping from $V_n(HG)$ to $V_n(G)$ and
since the mapping is via conjugation, it is also one-to-one and a homomorphism.  This
demonstrates that $\phi$ is an isomorphism as desired.
\end{proof}

Of particular interest is when the group $G\leq N_{S_n}(H)\cap \text{Stab}_{S_n}(R)$ is the trivial group.
This says
\[V_n(H)\cong V_n\]
and we can state the following Theorem using Corollary \ref{cor:semiregularnecessity} and Theorem \ref{thm:isomorphism} 

\setcounter{thm}{0}
\begin{thm}\label{cor:iffsemi}
  Let $G \leq S_n$. Then $V_n(G) \cong V_n$ if and only if $G$ is semiregular. 
\end{thm}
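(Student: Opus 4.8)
The plan is to dispatch the two implications of the biconditional separately, drawing on the machinery already assembled. For the necessity direction, that $V_n(G)\cong V_n$ forces $G$ to be semiregular, I would simply appeal to Corollary \ref{cor:semiregularnecessity}. Its argument runs by contraposition: assuming $G$ is not semiregular, one selects a nontrivial $g\in G$ with a fixed point $x\in\{1,\ldots,n\}$ and studies the iterated permutation $[\emptyset]_g$. The decisive feature, isolated in Lemma \ref{lem:anticonjugacy}, is that $[\emptyset]_g$ admits nontrivial finite orbits---the points $\gamma_k=x^{k-1}yx^\omega$---accumulating at the fixed point $x^\omega$. Because Rubin's Theorem guarantees that any isomorphism $V_n(G)\to V_n$ is induced by conjugation by a homeomorphism, and conjugation preserves orbit structure, it is enough to observe that no element of $V_n$ exhibits nontrivial finite orbits arbitrarily near a fixed point; this is precisely the three-case cone analysis carried out in that lemma.

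For the sufficiency direction, that semiregularity of $G$ yields $V_n(G)\cong V_n$, I would specialize Theorem \ref{thm:isomorphism}. Here a small relabeling is needed: the semiregular group $G$ of the present statement plays the role of $H$ in that theorem, while the group called $G$ there is taken to be trivial. Since $\{id\}\leq N_{S_n}(H)\cap\text{Stab}_{S_n}(R)$ holds vacuously for any orbit transversal $R$ of $G$, Theorem \ref{thm:isomorphism} furnishes an isomorphism $V_n(G)=V_n(G\cdot\{id\})\to V_n(\{id\})=V_n$, realized concretely by conjugation by the transducer $A_{id}=A_{G,R,id}$.

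In effect, the final theorem is a direct assembly of two previously established statements, so the only remaining work is bookkeeping. The genuine difficulty has already been absorbed into Theorem \ref{thm:isomorphism}, whose proof must construct the Rubin conjugator as a finite transducer and then verify, generator by generator, that conjugation by it carries small swaps and iterated permutations of $V_n(HG)$ into $V_n(G)$ and back again. That verification---tracking how the state-dependent rewrite and transition functions $\lambda,\pi$ interact with the prefix substitutions---is where I would expect essentially all of the effort to lie; once it is in hand, the biconditional above follows immediately.
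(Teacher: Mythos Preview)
Your proposal is correct and matches the paper's own argument essentially verbatim: the paper derives the biconditional by combining Corollary~\ref{cor:semiregularnecessity} for the forward implication with the $G=\{id\}$ specialization of Theorem~\ref{thm:isomorphism} for the reverse, exactly as you describe. The relabeling you note (the present $G$ playing the role of $H$ in Theorem~\ref{thm:isomorphism}) is precisely the observation the paper makes just before stating the result.
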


\section{Examples and Remarks}
Using Theorem \ref{thm:isomorphism}, as well as some extensions of the non-isomorphism results in Section 2,
we are able to distinguish
several isomorphism classes for small $n$ and a few are described here.\\
\\
$n$=2:\\

\newcommand{\Ttwo}{\mathcal{T}_2}

The symmetric group on two points has two subgroups, the trivial group and itself.  Adding the action of the trivial group to $V_2$ merely
gives the familiar Thompson's group $V$, whereas $V_2(S_2)$ has the extra action of elements of the form $[\rho]_{(1\;2)}$.  One way to picture
the action of these elements on $\Ttwo$ is `reflecting' or `flipping' the entire tree beneath the node $\rho$.  
However, the symmetric group $S_2$ is semiregular,
meaning $V_2\cong V_2(S_2)$.  The transducer $A_{id}$ used for this isomorphism is shown in Figure \ref{fig:twostate}. We provide
two descriptions of this isomorphism, an informal heuristic argument and a formal description.

Following the action of the transducer down the infinite tree $\Ttwo$, we see that $A_{id}$ enters the state $id$ after travelling down the left
branch (corresponding to 1) and enters the state $(1\;2)$ down the right branch (corresponding to 2).  Using this picture, $A_{id}$ will
`flip' the right half of each node in the tree.  Conjugating by this will undo the infinite action of $[\rho]_{(1\;2)}$ at node $\rho$,
cutting it off after one level.  The uniform nature of $A_{id}$ will cancel with itself when conjugating elements of $V$, adding only a few 
additional small swaps at a finite number of locations. 

From the calculations
in Section \ref{sec:Isomorphism}, we can show the action of the isomorphism explicitly.
Each element $v\in V_2$ can be described using two partitions of $\CS_n$ into cones, 
$\big\{[\alpha_i]\big\}_{i=1}^k$ and $\big\{[\beta_i]\big\}_{i=1}^k$, such that 
$(\alpha_i\|\chi)\cdot v= \beta_i\|\chi$ for all $\chi\in X^\omega$.  This description extends nicely to $w\in V_2(S_2)$, where each
element requires a third list $\{g_i\}_{i=1}^k$ of permutations in $S_2$, describing the action of $w$ on the infinite tail.  The action of $w$
can be summarized in the notation earlier in this paper as $w=v\prod_{i=1}^k[\beta_i]_{g_i}$. Note that $g_i$ may be the trivial permutation, in which
case $[\beta_i]_{g_i}$ is the identity.  On the other hand, referring back to Section \ref{sec:Isomorphism}, it is clear that 
\begin{eqnarray*}
\big([\beta_i]_{(1\;2)}\big)\phi&=&\lfloor\beta_i\cdot A_{id}\rfloor_{(1\;2)}.
\end{eqnarray*}

The conjugate $(v)\phi$ is harder to describe, but it is derived in nearly the same fashion as small swaps.  If $\alpha_i$ and $\beta_i$
end in the same letter, i.e. both end in 1 or 2, then $(\alpha_i,id)\pi=(\beta_i,id)\pi$ and in particular,
$(\alpha_i,id)\pi^{-1}(\beta_i,id)\pi=id$. This implies that 
\begin{eqnarray*}
\big((\alpha_i\cdot A_{id})\|\chi\big)\cdot(v)\phi&=&(\beta_i\cdot A_{id})\|\chi.
\end{eqnarray*}
However, when $\alpha_i$ and $\beta_i$
end in different letters, $(\alpha_i,id)\pi^{-1}(\beta_i,id)\pi=(1\;2)$ since there are only two elements in $S_2$. In this case,
\begin{eqnarray*}
\big((\alpha_i\cdot A_{id})\|\chi\big)\cdot(v)\phi&=&(\beta_i\cdot A_{id})\|(\chi\cdot \lfloor\emptyset\rfloor_{(1\;2)}).
\end{eqnarray*}
Let $I=\{i|\alpha_i\text{ and }\beta_i\text{ end in different letters}\}$.  Then the element
$(w)\phi$ is a product of three elements in $V$: one corresponding to the partitions $\{[\alpha_i\cdot A_{id}]\}_{i=1}^k$ and
$\{[\beta_i\cdot A_{id}]\}_{i=1}^k$, the element $\prod_{i\in I} \lfloor\beta_i\cdot A_{id}\rfloor_{(1\;2)}$, and the element
$\prod_{i=1}^k\lfloor\beta_i\cdot A_{id}\rfloor_{g_i}$.  This can be generalized to higher $n$ and other semiregular groups, but
quickly becomes cumbersome and perhaps unhelpful for studying these groups in generality.\\
\\
$n$=3:\\

\setcounter{thm}{10}
For permutations on three points, there are four unique subgroups of $S_3$ up to conjugation: 
the trivial group, $S_2:=\langle(1\;2)\rangle$, $C_3:=\langle(1\;2\;3)\rangle$,
and $S_3$.  It is clear that $C_3$ is semiregular, so $V_3\cong V_3(C_3)$, but both $S_2$ and $S_3$ are not.  However, the normalizer
of $C_3$ is all of $S_3$, and $S_2$ stabilizes a potential orbit transversal of $C_3$, namely the set $\{3\}$. This fits the criteria for
Theorem \ref{thm:isomorphism} and therefore $V_3(S_3)=V_3(C_3S_2)\cong V_3(S_2)$.  This splits $V_3(G)$ into two distinct isomorphism classes.
Both isomorphisms are built using the same transducer in Figure \ref{fig:triangular}, perhaps with a different orbit transversal $R$, which can
be done simply with a relabeling of the alphabet $\{1,2,3\}$. 
Note that comparing this to the case when $n=2$, this highlights the subtle yet rather intuitive fact that the number of 
points a permutation group acts on will significantly change orbit dynamics.
\begin{rem}
$V_n(G)\cong V_n(H)$ does not imply that $V_m(G) \cong V_m(H)$, for $m\neq n$.
\end{rem}

\noindent $n$=4:\\

A similar process can used to distinguish the four isomorphism classes for $V_4(G)$.
\[V_4\cong V_4\Big(\big\langle(1\;2)(3\;4)\big\rangle\Big)\cong
V_4\Big(\big\langle(1\;2\;3\;4)\big\rangle\Big)\cong V_4\Big(\big\langle(1\;2)(3\;4),(1\;3)(2\;4)\big\rangle\Big)\]
\[V_4\Big(\big\langle(1\;2)\big\rangle\Big)\cong V_4\Big(\big\langle(1\;2),(1\;2)(3\;4)\big\rangle\Big)\cong
V_4\Big(\big\langle(1\;2),(1\;3\;2\;4)\big\rangle\Big)\]
\[V_4\Big(\big\langle(1\;2\;3)\big\rangle\Big)\cong V_4(A_4)\]
\[V_4(S_3)\cong V_4(S_4)\]
The method of examining orbit structure in Lemma \ref{lem:anticonjugacy} can be used to differentiate these classes,
particularly the orbits near fixed points of elements like $[\emptyset]_g$.  Farley and Hughes's non-isomorphism result
in \cite{FarleyHughesFSS} could also be used to distinguish these classes.
This example demonstrates the importance of the orbit
structure of the permutation group used in the extension over than the isomorphism type of the group used.
\begin{rem}
There are integers $n>2$ and $G\cong H$ subgroups of $S_n$ with $V_n(G)\not \cong V_n(H)$.
\end{rem}

Thus far, we have discussed whether a given group is isomorphic to $V_n$, but there is also
an interest in understanding when a group can be found as a subgroup of $V_n$.  One cause for such interest comes
from a conjecture of Lehnert, modified by Bleak, Matucci, and Neuh\"{o}ffer in \cite{BMNcoCF}, that Thompson's group $V$ is a universal group with context-free co-word problem, i.e. a universal $co\mathscr{CF}$ group (so every finitely generated subgroup of $V$ is a $co\mathscr{CF}$ group, and all $co\mathscr{CF}$ groups embed into $V$).
Related to this, Farley in 
\cite{FarleyCounters} describes a family of groups which he proves are all $co\mathscr{CF}$ groups, which family includes the groups $V_n(G)$ discussed here.  Farley proposes that some of these groups might be used to provide counterexamples to the Lehnert conjecture, which would occur if one can prove that one of these groups cannot embed into $V$.  The 
techniques of this paper might be useful in finding transducers that could be used to embed such groups into 
$V_n$ via topological conjugation.
This is an area for future investigations.

\bibliographystyle{plain}
\bibliography{references}
\end{document}